\newcommand\blue[1]{\textcolor{blue}{#1}}
\newcommand\magenta[1]{\textcolor{magenta}{#1}}
\newcommand\violet[1]{\textcolor{violet}{#1}}
\definecolor{fgreen}{RGB}{44,144, 14}
\renewenvironment{proof}{{\bfseries Proof.}}{\qed}
\numberwithin{equation}{section} 
\newtheorem{theorem}{Theorem}[section] 
\newtheorem{proposition}[theorem]{Proposition} 
\newtheorem{lemma}[theorem]{Lemma} 
\theoremstyle{definition}
\newtheorem{definition}[theorem]{Definition} 
\newtheorem{example}[theorem]{Example}
\def\R{\mathbb R}
\def\s{\mathbb S}
\def\C{\mathbb C}
\def\F{\mathbb F}
\def\H{\mathbb H}
\def\ib{\mathbf {i}}
\def\jb{\mathbf {j}}
\def\kb{\mathbf {k}}
\def\d{\mathbf{ d}}
\def\N{\mathbb N}
\def\E{\mathbb E}
\def\R{\mathbb R}
\def\d{\mathcal D}
\def\g{\mathcal G}
\newcommand{\SL}{\mathrm{SL}}
\newcommand{\GL}{\mathrm{GL}}
\def\M{{\mathrm M}}
\def\d{\mathcal D}
\def \x {{\bf x}}
\def\R{\mathbb {R}}
\def\C{\mathbb {C}}
\def\N{\mathbb {N}}
\def\H{\mathbb {H}}
\def\E{\mathbb {E}}
\def\F{\mathbb {F}}
\def\d{\mathbf {d}}
\def\ib{\mathbf {i}}
\def\jb{\mathbf {j}}
\def\ZC{\mathcal {Z}}
\def\PC{\mathcal {P}}
\def\BC{\mathcal {B}}
\def\g{\mathfrak {g}}
\def\s{\mathfrak {s}}
\def\l{\mathfrak {l}}
\def\lto{\longrightarrow}
\newcommand{\secref}[1]{Section~\ref{#1}}
\newcommand{\thmref}[1]{Theorem~\ref{#1}}
\newcommand{\lemref}[1]{Lemma~\ref{#1}}
\begin{document}

\title[Real adjoint orbits of special linear groups]{Real adjoint orbits of special linear groups}
\author[K. Gongopadhyay,   T.  Lohan and C Maity ]{Krishnendu Gongopadhyay, Tejbir Lohan and Chandan Maity}

\address{Indian Institute of Science Education and Research (IISER) Mohali,
	Knowledge City,  Sector 81, S.A.S. Nagar 140306, Punjab, India}
\email{krishnendu@iisermohali.ac.in, krishnendug@gmail.com}

\address{Indian Institute of Technology Kanpur, Kanpur 208016, Uttar Pradesh, India}
\email{tejbirlohan70@gmail.com}

\address{Indian Institute of Science Education and Research (IISER)  Berhampur, Berhampur
760010, Odisha, India }
\email{cmaity@iiserbpr.ac.in }

\makeatletter
\@namedef{subjclassname@2020}{\textup{2020} Mathematics Subject Classification}
\makeatother

\subjclass[2020]{Primary 20E45; Secondary: 22E60, 20G20}
\keywords{Reversibility, adjoint reality, real element,  strongly real element, special linear Lie algebra.}


\begin{abstract}
Let $ G $ be a linear Lie group with Lie algebra $ \g $. An element $ X \in \g$ is called $\mathrm{Ad}_G$-real if  $ -X=gXg^{-1} $ for some $ g\in G $. Moreover, if $ -X=gXg^{-1} $  holds  for some involution $ g\in G $, then $ X $ is  called strongly ${\rm Ad}_G$-real.  We have classified the $\mathrm{Ad}_G$-real and strongly $\mathrm{Ad}_G$-real orbits in the special linear Lie algebra $ \s\l(n,\F) $ for $ \F=\C$ or $\H $.
\end{abstract}

\maketitle

\section{Introduction}
A group element that is conjugate to its inverse is known as a \emph{real} element in the group. Such an element is also known as a \emph{reversible} element. The notion is closely related to the `strongly reversible' or `strongly real' elements in a group, which are product of two involutions (i.e., elements of order at most two) in the group. The real elements or the real conjugacy classes in a group have seen many investigations in the literature from different perspectives. The terminology `reversible' has been motivated by dynamical (or geometric) considerations, whereas the terminology `real' has been motivated by a classical theorem of Frobenius and Schur that states that the number of real irreducible characters of a finite group $ G $ is equal to the
number of reversible conjugacy classes of $ G $.
The latter viewpoint is of importance in representation theory. Despite many works, complete classifications of real and strongly real elements are known only for a very few families of Lie groups. Mostly, the equivalence between real and strongly real classes has been understood for certain groups, e.g.,  \cite{W, ST}.
A complete classification of such elements is known only for a very few families. We refer to the monograph \cite{FS} for a survey.

Here we ask a related problem that comes from the natural action of a Lie group $G$ on its Lie algebra $\g$ by adjoint representation. When $G$ is a linear Lie group, the above action is given by conjugation, i.e., $ {\rm Ad}(g)X:=gXg^{-1}$.   An element $X\in \g$ is called $ {\rm Ad}_G$-real  if $-X =gXg^{-1} $ for some $g\in G$.  An  $ {\rm Ad}_G$-real element $X$ is called strongly $ {\rm Ad}_G$-real if $-X = \tau X \tau^{-1} $ for some involution   $\tau\in G$, compare \cite[Definition 1.1]{GM}. Observe that if $X\in \g$ is  $ {\rm Ad}_G$-real, then $\exp X$ is real in $G$. This notion may be thought of as an infinitesimal analogue of the classical reality for Lie algebras. The investigation of the adjoint orbits in a semi-simple Lie group is an active area of research in differential geometry; see the survey \cite{CoM} and relatively recent article \cite{Mc}.
The above problem may be restated as the problem of classifying the `real' and `strongly real' adjoint orbits of semi-simple Lie groups.
There are many directions of research on adjoint orbits; see \cite{ABB} for a symplectic geometry point of view, \cite{CM, BCM, BCM2} for a topological point of view. Nevertheless, we have not seen any literature where such reality or reversibility problems have been addressed so far.

For a Lie group $G$ with Lie algebra $\g$,  a natural problem following \cite[Definition 1.1]{GM} is classification of the $ {\rm Ad}_{{ G }}$-real elements in $\g$. The unipotent real and strongly real classes in classical Lie groups have been classified in \cite{GM} via classification of the  $ {\rm Ad}_{{ G }}$-real and strongly $ {\rm Ad}_{{ G }}$-real nilpotent elements in classical Lie algebras. This notion has also been used in \cite{GLM1} to classify the reversible and strongly reversible affine transformations.

Let $\F$ denote the complex numbers $\C$ or the real quaternions $\H$. It is a fact that if $ X\in \g\l(n,\F) $ is  $ {\rm Ad}_{{\rm GL }(n,\F)}$-real, then $ X $ is an element of $ \s\l(n,\F) $.   Also for the elements in $ \s\l(n,\F) $, the $ {\rm Ad}_{{\rm SL }(n,\F)} $-reality and the $ {\rm Ad}_{{\rm GL }(n,\F)} $-reality are equivalent; see \lemref{equi-SL-GL}. 
These motivate us to investigate the Ad-reality problem for the adjoint action of the special linear groups in this paper. We  have  classified  the  $ {\rm Ad}_{{\rm SL}(n,\F)} $-real and the strongly  $ {\rm Ad}_{{\rm SL}(n,\F)} $-real elements in  the Lie algebra $\frak{sl}(n, \F)$. 
We hope the ideas used here will motivate similar classification in other Lie algebras.

Sometimes, we shall drop the `$\mathrm{Ad}$' and henceforth call the above type of elements as real elements,  resp.  strongly real elements in the respective Lie algebras. To state our main results, we need some terminology concerning the partition of $n$. 
A \textit{partition} of a positive integer $n$ is an object of the form $ {\d}(n) := [ d_1^{t_{d_1}}, \dots,   d_s^{t_{d_s}} ],$ where $t_{d_i}, d_i \in \N, 1\leq i\leq s, $ such that $ \sum_{i=1}^{s} t_{d_i} d_i = n, t_{d_i} \geq 1 $ and  $ d_1 >  \cdots > d_s > 0$. 
For a partition $\d (n)\, =\, [ d_1^{t_{d_1}},\, \ldots ,\, d_s^{t_{d_s}} ]$ of $n$, define 
\begin{align}
	{\N}_{\d(n)}  :=\, \{ d_i \,\mid\, 1 \,\leq\, i \,\leq\, s \}\, \quad &{\rm and } \quad
	{\E}_{\d(n)}:=\,  {\N}_{\d(n)}   \cap  2\N  
	\label{definition-Nd-Ed}\\ 
	\E_{{\d}(n)}^2  := \{ \eta \in \E_{{\d}(n)} \,&\mid\, \eta \equiv 2   \pmod  4   \}. \nonumber
\end{align} 
Let $ \mathcal{P}(n) $ be the \textit{set of all partitions of $n$}. A partition  $ {\d}(n) $ of  $n$ will be called \textit{even} if $d_i$ is even for all $1 \leq i \leq s$, i.e., $ \E_{{\d}(n)}\,=\, \N_{{\d}(n)} $. 
Let $ \mathcal{P}_{\rm even}(n) $  be the subset of $ \mathcal{P}(n) $ {\it consisting of all even partitions of $n$}.  We call a partition  $ {\d}(n) $ of $n$ to be   \textit{very even} if ${\d}(n)$ is even, and $t_\eta$ is even for all $ \eta \in \N_{{\d}(n)}. $  Let $ \mathcal{P}_{\rm v.even}(n) $ be the subset of $ \mathcal{P}(n) $ consisting of all very even partitions of $n$. 
Now define  
\begin{align}\label{definition-P-e-tilde}
	\widetilde{ \mathcal{P}_e}(n) := \{  {\d}(n) \in  \mathcal{P}_{\rm even}(n)  \backslash  \mathcal{P}_{\rm v.even}(n)  \mid \sum_{\eta \in \E_{{\d}(n)}^2}  t_\eta \ \hbox {is  odd} \ \}.
\end{align}   
Equivalently,  $ \widetilde{ \mathcal{P}_e}(n)$ denote the set of all partitions ${\d}(n)$ of $n$ such that:
\begin{itemize}
	\item $ \N_{{\d}(n)} \,=\, \E_{{\d}(n)}$,
	\item $ {\d}(n)$ is such that not all $t_{d_i}$ corresponding to $d_i$ are even,
	\item $  \sum_{d_i \in \E_{{\d}(n)}^2}  t_{d_i}$ is odd.
\end{itemize}

With the above notation, our first result classifies  ${\rm Ad}_{{\rm SL }(n,\F)} $-real elements in $\frak{sl}(n, \F)$. 
This theorem is a combination of \thmref{thm-real-sl(n,C)} and \thmref{thm-real-sl(n,H)}.
\begin{theorem}\label{thm-real}
	Let $X \in \mathfrak{sl}(n,\F)$ be any arbitrary element. Then $X $ is ${\rm Ad}_{{\rm SL }(n,\F)} $-real if and only if both the following conditions hold:
	\begin{enumerate}
		\item \label{cond-1-real-sl(n,H)-thr-intro} 
		Let $\lambda$ represent an eigenvalue class of $X$ such that 
		
		\medskip {\rm (a)}  the real part of $\lambda$ is non-zero  for the case  when $ \F=\H $, 
		
		\medskip	{\rm (b)}  $\lambda \neq 0$ for the case  when $ \F=\C $.
		
		\medskip 
		Then, $-\lambda$ also represents an eigenvalue class of $X$ with the same multiplicity. 
		\item \label{cond-2-real-sl(n,H)-thr-intro} Let $ m $ be the multiplicity of  the eigenvalue classes represented by $ \lambda $ and $ -\lambda $. 
		Let $ \d_\lambda $  and $ \d_{-\lambda} $ be the partitions of $ m $ associated to the eigenvalue classes $ \lambda $ and $ -\lambda $, respectively. Then 
		$$ \d_\lambda\,=\, \d_{-\lambda}  \,.$$
	\end{enumerate} 
\end{theorem}

The classification of strongly ${\rm Ad}_{{\rm SL }(n,\F)} $-real elements in $ \mathfrak{sl}(n,\F)$ is more subtle, and the results are different depending upon the arithmetical properties of the underlying skew-fields. The following result gives a classification of the strongly ${\rm Ad}_{{\rm SL }(n,\C)} $-real elements in $ \mathfrak{sl}(n,\C)$.

\begin{theorem}\label{thm-strongly-real-sl(n,C)}
	Let $X \in \mathfrak{sl}(n,\C)$ be a non-zero arbitrary $ {\rm Ad}_{{\rm SL}(n,\C)} $-real element.  Let $0$  be an eigenvalue of $X$ with the multiplicity  $p_o$,   $  p_o \geq 0$,
	and $ {\d}(p_o) \in \mathcal{P}(p_o) $ be the corresponding partition associated to the eigenvalue $ 0 $. Then $X$ is strongly real if and only if  either $0$ is an eigenvalue of $X$ such that $ {\d}(p_o) \not\in  \widetilde{ \mathcal{P}_e}(p_o)$ or $n \not\equiv 2 \  (mod \ 4 \ )$.
\end{theorem}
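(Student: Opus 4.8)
The plan is to reduce the question about $\mathfrak{sl}(n,\C)$ to a statement about which $\mathrm{Ad}$-real $X$ admit an involution $\tau\in\SL(n,\C)$ with $\tau X\tau^{-1}=-X$, and then track the determinant constraint carefully. By Theorem~\ref{thm-real} we may fix the Jordan structure of $X$: for each eigenvalue class $\{\lambda,-\lambda\}$ with $\lambda\neq 0$, the two classes pair up with identical partitions, and the eigenvalue $0$ contributes a partition ${\d}(p_o)$. The first step is the reduction to the $\GL$-setting: since Lemma~\ref{equi-SL-GL} gives the equivalence of $\mathrm{Ad}_{\SL}$- and $\mathrm{Ad}_{\GL}$-reality, the existence of \emph{some} $g\in\GL(n,\C)$ with $gXg^{-1}=-X$ is automatic; the content is to produce such a $g$ that is simultaneously an involution \emph{and} has determinant $1$. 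So I would first build a ``model'' involution $g_0\in\GL(n,\C)$ realizing $-X$, block by block over the eigenvalue data, compute $\det g_0$, and then ask whether $g_0$ can be modified within the centralizer $Z_{\GL(n,\C)}(X)$ (equivalently, the $-1$-eigenspace structure) to an involution of determinant $1$.

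The key computation is the following. On a pair of blocks for $\pm\lambda$ with $\lambda\neq 0$, there is an obvious involution swapping the two Jordan strings (suitably sign-corrected so it conjugates $X$ to $-X$); such a swap on a $2k$-dimensional space has determinant $(-1)^{k}$ up to a controllable sign, and crucially one is free to multiply this piece by $\pm1$ on each paired block, so these contribute a freely adjustable sign to $\det g_0$ — hence they never obstruct. The genuinely rigid part is the nilpotent block at eigenvalue $0$: here one needs an involution $\tau_0$ on the $p_o$-dimensional space with $\tau_0 N\tau_0^{-1}=-N$ for $N$ nilpotent of type ${\d}(p_o)$. For a single Jordan block of size $d$, the anti-diagonal-type conjugation sending $N\mapsto -N$ is an involution exactly when the signs work out, and its determinant is $(-1)^{\lfloor d/2\rfloor}$ times a sign depending on the parity of $d$; odd-size blocks can be sign-normalized to determinant $+1$, while an even block of size $d$ forces a determinant contribution $(-1)^{d/2}$, which is $-1$ precisely when $d\equiv 2\pmod 4$. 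Counting multiplicities, the determinant of the forced part of $\tau_0$ equals $(-1)^{\sum_{\eta\in\E^2_{{\d}(p_o)}} t_\eta}$ \emph{when} ${\d}(p_o)$ is even, and when ${\d}(p_o)$ is not very even one has genuine freedom (an odd $t_\eta$ lets one pair up and re-sign the blocks) to absorb a sign — this is exactly the role of the set $\widetilde{\mathcal P_e}(p_o)$. Meanwhile, whether a residual $-1$ in the determinant can be fixed globally depends on the total dimension: multiplying the whole $\tau$ by a scalar $c$ with $c^2=1$ changes $\det$ by $c^n$, and more relevantly, the parity obstruction survives only when $n\equiv 2\pmod 4$.

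So the argument proceeds: (i) assemble $g_0$ and show $X$ is strongly $\mathrm{Ad}_{\SL(n,\C)}$-real iff one can choose the block-wise involutions with total determinant $1$; (ii) show all $\pm\lambda$-pairs ($\lambda\neq0$) and all odd-size nilpotent blocks at $0$, and all even blocks with size $\equiv 0\pmod 4$, contribute adjustable or trivial determinant; (iii) isolate the obstruction to the blocks of size $\equiv 2\pmod 4$ at eigenvalue $0$, whose number is $\sum_{\eta\in\E^2_{{\d}(p_o)}}t_\eta$; (iv) observe that if ${\d}(p_o)$ is even but not very even, some $t_\eta$ is odd, allowing a re-pairing trick that removes the obstruction regardless of the parity sum — hence ${\d}(p_o)\notin\widetilde{\mathcal P_e}(p_o)$ already suffices; (v) in the remaining case ${\d}(p_o)\in\widetilde{\mathcal P_e}(p_o)$ the forced determinant of $\tau_0$ is $-1$ with no internal freedom, so strong reality holds iff the global scalar correction works, i.e.\ iff $n\not\equiv 2\pmod 4$; (vi) conversely, when ${\d}(p_o)\in\widetilde{\mathcal P_e}(p_o)$ \emph{and} $n\equiv 2\pmod4$, show every involution conjugating $X$ to $-X$ has determinant $-1$, ruling out $\SL$. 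The main obstacle I anticipate is step (vi), the lower bound: one must prove that \emph{no} involution — not just the model ones — can do better, which requires understanding the full variety of involutions intertwining $X$ and $-X$ (a torsor under the relevant centralizer) and showing the determinant is a well-defined invariant of that torsor modulo the available scalar freedom; this is where a careful analysis of $Z_{\SL(n,\C)}(X)$ and its component group, together with the classification of involutive elements of that reductive group, will be needed.
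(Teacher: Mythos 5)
Your overall strategy --- analysing the full set of involutions $\tau$ with $\tau X\tau^{-1}=-X$ and doing determinant bookkeeping block by block --- is genuinely different from the paper's route. The paper splits $X=X_s+X_n$, notes that any reversing involution for $X$ simultaneously reverses $X_s$ and $X_n$ (by uniqueness of the Jordan decomposition), and then quotes the previously established classifications for semisimple elements (Proposition \ref{lem-semi-strongly-real-sl(n,C)}) and for nilpotent elements (Theorem \ref{lem-nil-strongly-real-sl(n,C)}); for sufficiency it assembles the involution as a product $\tau\sigma$ of two commuting involutions reversing $X_n$ and $X_s$ separately. Your plan is more self-contained, and your worry about step (vi) is largely answered by the paper's device of passing to $X_s$ and $X_n$. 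However, your proposal contains a concrete error that breaks the bookkeeping.

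The error is in step (ii). Any $\tau$ with $\tau X\tau^{-1}=-X$ necessarily preserves the generalized $0$-eigenspace and maps the generalized $\lambda$-eigenspace onto the generalized $(-\lambda)$-eigenspace; hence on $V_\lambda\oplus V_{-\lambda}$ (each summand of dimension $m_\lambda$) it has the form $\left(\begin{smallmatrix}0&B\\ C&0\end{smallmatrix}\right)$, and $\tau^2=\mathrm{I}$ forces $C=B^{-1}$. Its determinant on this piece is therefore $(-1)^{m_\lambda}\det(B)\det(B^{-1})=(-1)^{m_\lambda}$: this is rigid, not ``freely adjustable''. Multiplying the whole piece by $-1$ changes the determinant by $(-1)^{2m_\lambda}=1$, and rescaling $B$ alone destroys the involution property. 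So every reversing involution carries a \emph{forced} factor $(-1)^{p}$, $p=\sum_\lambda m_\lambda$, from the nonzero spectrum, and the obstruction is not concentrated in the size-$\equiv 2\pmod 4$ Jordan blocks at $0$ as asserted in your step (iii). Once this factor is reinstated, your criterion becomes ``some involution $\tau_0$ reversing $\mathrm{N}(\d(p_o),0)$ has $\det\tau_0=(-1)^p$'', and translating that into the condition of the theorem requires, at the very least, the congruence $p_o\equiv 2\sum_{\eta\in\E^2_{\d(p_o)}}t_\eta\pmod 4$ for even partitions --- a step entirely missing from the proposal. To see that the omission is not cosmetic, run the corrected computation on $p_o=4$ with $\d(p_o)=[2^2]$ together with one eigenvalue pair $\pm\lambda$ of multiplicity one (so $n=6$): the $0$-part forces $\det\tau_0=+1$ and the $\pm\lambda$ part forces a factor $-1$, so every reversing involution has determinant $-1$, contrary to what your steps (ii)--(iv) would predict. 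As written, the proposal cannot be completed into a correct proof.
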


For $\mathfrak{sl}(n,\H)$, we have the following classification of strongly ${\rm Ad}_{{\rm SL }(n,\H)} $-real elements.  
\begin{theorem}\label{thm-strong-real-sl(n,H)}
	Let $X \in \mathfrak{sl}(n,\H) $ be an arbitrary non-zero ${\rm Ad}_{{\rm SL }(n,\H)} $-real element. Then the following statements are equivalent.
	\begin{enumerate}
		\item  $X$ is strongly ${\rm Ad}_{{\rm SL }(n,\H)} $-real.
		\item \label{cond-2-main-thrm-sl(n,H)-str} Let $\mu$ be the non-zero purely imaginary complex representative of an eigenvalue class of $X$ with the multiplicity $m_{\mu}$ and $\mathbf{d}({m_\mu}) = [ d_1^{t_{d_1}}, \dots,   d_s^{t_{d_s}} ] \in \mathcal{P}(m_{\mu}) $ be the corresponding partition. Then $t_{d_\ell}$ is even for all $ 1 \leq \ell \leq s$.
	\end{enumerate}
\end{theorem}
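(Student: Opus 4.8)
The plan is to localize the problem at each eigenvalue class of $X$ and, at a purely imaginary class, to reformulate strong reality as the existence of a conjugate-linear operator with a prescribed square. First I would record the elementary fact that every involution of $\GL(n,\H)$ already lies in $\SL(n,\H)$: such an element is conjugate to $\mathrm{diag}(I_p,-I_q)$, so its reduced norm (equivalently, its complex determinant under $\GL(n,\H)\hookrightarrow\GL(2n,\C)$) is $1$. Hence, for strong reality, $\GL(n,\H)$ and $\SL(n,\H)$ may be used interchangeably (cf. \lemref{equi-SL-GL}), and the task is to construct or obstruct an involution $\tau\in\GL(n,\H)$ with $\tau X\tau^{-1}=-X$. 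Since $X$ is ${\rm Ad}$-real, \thmref{thm-real} describes its eigenvalue data; I would split the eigenvalue classes of $X$ into (i) the zero class, (ii) pairs $\{[\lambda],[-\lambda]\}$ with $\mathrm{Re}\,\lambda\neq 0$, and (iii) classes $[\mu]$ with $\mu$ purely imaginary and non-zero, which are self-paired under $X\mapsto-X$ because $\bar\mu=-\mu$. A direct sum of involutions is an involution, and any involution conjugating $X$ to $-X$ preserves each generalized eigenspace in (i), (iii) and each summed pair in (ii); so $X$ is strongly real if and only if its restriction to every such piece is.

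The pieces of type (i) and (ii) impose no condition. For a pair as in (ii), \thmref{thm-real} forces the two associated partitions to coincide, so the block is conjugate to $Y\oplus(-Y)$ and the exchange $v\oplus w\mapsto w\oplus v$ is a reversing involution. For the zero class the block is a nilpotent $N$, and after putting $N$ in real Jordan form the diagonal sign operator $\mathrm{diag}(1,-1,1,\dots)$ inside each Jordan block reverses $N$ and squares to the identity, lying in $\GL(n,\R)\subset\GL(n,\H)$. Thus the entire content concerns type (iii).

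For a purely imaginary class write the block as $Y=\mu I_{m_\mu}+N$ with $N$ a real nilpotent matrix of Jordan type $\mathbf{d}(m_\mu)$. If every $t_{d_\ell}$ is even, I would pair the Jordan blocks of $N$ in twos; on a pair, acting on $\H^{2d}$, take $\tau=\bigl(\begin{smallmatrix}0&Q\\ Q^{-1}&0\end{smallmatrix}\bigr)$ with $Q=\jb D$ and $D=\mathrm{diag}(1,-1,\dots)$ the operator reversing the size-$d$ block. Using $\jb z\jb^{-1}=\bar z$ for $z\in\C$ together with $\bar\mu=-\mu$, $DND^{-1}=-N$ and $\bar N=N$, one checks $\tau^2=I$ and $\tau Y\tau^{-1}=-Y$. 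The direct sum over all pieces is then an involution in $\GL(n,\H)$, hence in $\SL(n,\H)$, so $X$ is strongly real. Observe that the quaternion $\jb$ is exactly what carries the complex scalar $\mu$ to $-\mu$; this device has no analogue inside $\mathfrak{sl}(n,\C)$, which is the source of the discrepancy with \thmref{thm-strongly-real-sl(n,C)}.

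For the converse, let $\tau\in\GL(n,\H)$ be an involution with $\tau X\tau^{-1}=-X$ and restrict to a $[\mu]$-piece; view it over $\C$, where $X$ has generalized eigenspaces $V_\mu$ and $V_{-\mu}$ for the complex eigenvalues $\mu$ and $-\mu=\bar\mu$. Right multiplication by $\jb$ is $\C$-conjugate-linear, maps $V_\mu$ onto $V_{-\mu}$, squares to $-1$, and commutes with the $\C$-linear map $\tau$; since $\tau$ also maps $V_\mu$ onto $V_{-\mu}$, the composite $\theta:=\jb^{-1}\tau$ is a $\C$-conjugate-linear self-map of $V_\mu$. From $\tau^2=I$, $\tau\jb=\jb\tau$, $\tau X\tau^{-1}=-X$ and $\bar\mu=-\mu$ one obtains $\theta^2=-\mathrm{id}_{V_\mu}$ and $\theta N=-N\theta$, where $N=(X-\mu)|_{V_\mu}$ has Jordan type $\mathbf{d}(m_\mu)$. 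The anticommutation shows $\theta$ preserves the filtration $\ker N\subset\ker N^2\subset\cdots$, hence descends to each successive quotient; since $\theta^2=-\mathrm{id}$, every quotient carries a quaternionic structure and so has even complex dimension. As $\dim_\C(\ker N^k/\ker N^{k-1})$ equals the number of Jordan blocks of $N$ of size at least $k$, taking successive differences forces each $t_{d_\ell}$ to be even. The main obstacle here is not any single computation but maintaining the dictionary between the quaternionic eigenvalue-class bookkeeping and the complex generalized-eigenspace picture — in particular recognizing that a purely imaginary class is fixed by $X\mapsto-X$, so the reversing involution must preserve that block — and then correctly pinning down the sign $\theta^2=-\mathrm{id}$, which is precisely what forces a quaternionic (rather than real) structure and hence the even-multiplicity condition, and which also explains why the quaternionic answer differs from the complex one.
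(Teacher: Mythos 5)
Your proposal is correct, and its overall architecture coincides with the paper's: split $X$ along eigenvalue classes, note that the zero class and the $\{\pm\lambda\}$-pairs with $\mathrm{Re}\,\lambda\neq 0$ impose no condition, and isolate the purely imaginary classes as the crux. The sufficiency direction is essentially the paper's construction in \lemref{lem-jor-mat-strong}: pair the Jordan blocks two at a time and use an off-diagonal involution built from $\jb$ times the alternating sign matrix. Where you genuinely diverge is the necessity direction. The paper (\lemref{lem-partition-sl(n,H)} together with \lemref{lem-nilpotent-reverser}) writes the reverser on the $[\mu]$-block as $h\jb$ with $h\in\GL(m_\mu,\C)$, $h\bar h=-\mathrm{I}_{m_\mu}$ and $hN=-Nh$, then expresses $h$ in the ordered basis of \cite[Section 3.5]{GM} as a block upper-triangular matrix whose $j$-th diagonal block $g_{jj}$, of size $t_{d_j}\times t_{d_j}$, satisfies $g_{jj}\overline{g}_{jj}=-\mathrm{I}_{t_{d_j}}$, so that $|\det g_{jj}|^2=(-1)^{t_{d_j}}$ forces $t_{d_j}$ even. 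You instead repackage the same data as a conjugate-linear $\theta$ with $\theta^2=-\mathrm{id}$ anticommuting with $N$, observe that $\theta$ preserves the filtration $\ker N\subset\ker N^2\subset\cdots$ and induces a quaternionic structure on each quotient $\ker N^k/\ker N^{k-1}$, and read off the evenness of each $t_{d_\ell}$ from the standard dimension count. This is the same underlying obstruction, but your version is basis-free and avoids the ordered-basis and block-triangular bookkeeping, while the paper's version yields explicit matrix forms for the reversers that it reuses elsewhere. Your preliminary remark that every involution of $\GL(n,\H)$ automatically has $\det_{\H}=1$, so that $\GL$- and $\SL$-strong-reality coincide, is a clean simplification that the paper only uses implicitly via $\det_{\H}\geq 0$.
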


In the proofs of the above results, we have explicitly constructed conjugating elements for the Ad$ _{{\SL}(n,\F)}$-real and the strongly  Ad$_{{\SL}(n,\F)}$-real elements in $ \s\l(n,\F) $. Using the centralizer of such an element, one can get all the conjugating elements. In some cases,  characterizations of the conjugating elements are given.

The above results are proven by analyzing the linear action of the Lie groups on their respective Lie algebras. The Jordan canonical form is one of the key tools that help us to identify canonical elements in each conjugacy class. For this reason, we have restricted ourselves to $\mathbb{C}$ and $\mathbb{H}$. In \cite[Section 3.5]{GM}, a suitable ordered basis of the underlying vector space corresponding to a given nilpotent element has been described. Such an ordered basis is crucially used in classifying the strongly ${\rm Ad}_G$-real elements.

The paper is organized as follows. We fix notation and recall some preliminary results in \secref{sec-Notation}. Some useful lemmas have been proved in \secref{sec-lemma}. These lemmas are used to prove the main theorems. Finally, \thmref{thm-real-sl(n,C)}, \thmref{thm-real-sl(n,H)}, \thmref{thm-strongly-real-sl(n,C)}, and \thmref{thm-strong-real-sl(n,H)} are proven in \secref{sec-pf-thm}.

\section{Notation and Background}\label{sec-Notation} 
Here, we fix some notation and summarize some basic concepts and known results.
The Lie groups will be denoted by the capital 
letters, while the Lie algebra of a Lie group will be denoted by the corresponding lowercase fracture letter.
Let $ G $ be a linear Lie group with Lie algebra $ \g$, and $ X\in \g$. The {\it centralizer} of $X$ in $G$ is 
$$\ZC_{G} (X)\,:=\, \{h\,\in\, G \,\,\big\vert\,\,
hXh^{-1} \,=\, X\,  \}.$$

The notation $\F$  stands for either $\C$ or $\H$, where $\C$ denotes the field of complex numbers, and $\H$ denotes the division ring of Hamilton's quaternions. We recall that every element in $\H$ can be expressed as $a=a_0 + a_1 \ib + a_2 \jb + a_3 \kb$, where $a_0$, $a_1$, $a_2$, $a_3$ are real numbers,  and 
$\ib^2=\jb^2=\kb^2=-1$,  $\ib\jb=-\jb\ib=\kb$,  $ \jb\kb=-\kb\jb=\ib$, $\kb\ib=-\ib\kb=\jb$. The conjugate of $a$ is given by $\bar a=a_0-a_1\ib -a_2\jb -a_3\kb$.   We identify the real subspace $\R \oplus \R \ib$ with the usual complex plane $\C$, and then one can write $\H= \C \oplus  \C \jb$.  

The following notation will allow us to write block-diagonal square matrices with many blocks in a convenient way.
For $r$-many square matrices $A_i \,\in\, {\rm M}({m_i}, \F)$, $1 \,\leq\, i \,\leq\, r$,  the block diagonal square
matrix of size $\sum m_i \times \sum m_i$, with $A_i$ as the $i$-th 
block in the diagonal, is denoted by $A_1 \oplus \cdots \oplus A_r$. This is also abbreviated as  $\oplus_{i =1}^r A_i$.  For two disjoint ordered sets $(v_1,\, \ldots ,\,v_n)$ and $(w_1,\, \ldots ,\,w_m)$,  the ordered set $(v_1,\, \ldots ,\,v_n,\, w_1,\, \ldots ,\, w_m)$ will be denoted by
$$
(v_1, \,\ldots ,\,v_n) \vee (w_1,\, \ldots ,\,w_m)\, .
$$

\subsection{Matrices over the quaternions}
For an elaborate discussion on the theory of quaternions and matrices over the quaternions;  see \cite{rodman, FZ}.

\begin{definition}\label{def-eigen-M(n,H)}
	Let $A \in  \mathrm{M}(n,\H)$,  the algebra of $n \times n$ matrices over $\H$.  A non-zero vector $v \in \H^n $ is said to be a (right)  eigenvector of $A$ corresponding to the (right) eigenvalue  $\lambda \in \H $ if the equality $ Av = v\lambda $ holds.
\end{definition}

Eigenvalues of $A$ occur in similarity classes. If $v$ is an eigenvector corresponding to an eigenvalue $\lambda$,  then $v \mu \in v \mathbb H$ is an eigenvector corresponding to the eigenvalue $\mu^{-1} \lambda \mu$. Each similarity class of eigenvalues contains a unique pair of complex numbers which are complex conjugate to each other. Often we shall refer to them as `eigenvalues', though it should be understood that our reference is towards their similarity classes. In places where we need to distinguish between the similarity class and a representative, we shall write the similarity class of an eigenvalue representative $\lambda$ by $[\lambda]$. We shall mostly choose the \textit{unique complex representative} of a similarity class that has non-negative imaginary part.

Now, we recall the relationship between quaternion and complex matrices. Let $A \in  \mathrm{M}(n,\H)$ and write $ A = (A_1) + (A_2) \jb $,  where $ A_1,  A_2 \in \mathrm{M}(n,\C)$. Consider the embedding 
\begin{align}\label{definition-Phi-embedding}
	\Phi\colon  \mathrm{M}(n,\H)\, \lto \,  \mathrm{M}(2n,\C)\,; \quad \Phi(A):= \begin{pmatrix}
		A_1   &  A_2 \\
		- \overline{A_2} & \overline{A_1}  \\ 
	\end{pmatrix}.
\end{align}
The {\it determinant} and {\it trace}  of $A$ is defined to be
$ {\rm det}_{\H}(A) :=  {\rm det}(\Phi(A))$ and ${\rm tr }_\H( A) := {\rm tr }(\Phi(A))$, respectively. The above definitions of determinant and trace of a matrix over $\H$ are well-defined due to the Skolem-Noether theorem. For $ A\in {\M}(n, \F) $, let $ A^T $ denote the {\it transpose } of the matrix $ A $, where $\F= \C $ or $\H$.  Let $ \mathrm{I}_n$ denote the identity matrix of order $ { n \times n} $,   and $\mathrm{ O}_{ m \times n} $ denote the zero matrix of order $ { m \times n} $.
Consider the Lie groups 
$$ \mathrm{SL}(n,\C) = \{ g \in \mathrm{GL}(n,\C) \mid \det(g) = 1 \} \,,\quad  \mathrm{SL}(n,\H) = \{ g \in \mathrm{GL}(n,\H) \mid {\det}_{\H}(g)  = 1 \} $$
with the associated  Lie algebras 
$$\mathfrak{sl}(n,\C)  = \{ X \in  \mathrm{M}(n,\C) \mid {\rm tr }(X) = 0 \}\,, \quad \mathfrak{sl}(n,\H)  = \{ X \in  \mathrm{M}(n,\H) \mid {\rm tr }_\H( X)  = 0 \}\,.$$
The natural adjoint representation   $ {\rm Ad} : \mathrm{SL}(n,\F) \lto \mathrm{GL}( \mathfrak{sl}(n,\F) )$ is given by ${\rm Ad}(g)X := gXg^{-1}$ for all $ X \in \mathfrak{sl}(n,\F) $, $ g \in \mathrm{SL}(n,\F)$.

\begin{definition}[cf.~{\cite[p.  94]{rodman}}] \label{def-jordan}
	A \textit{Jordan block} $ \mathrm{J}(m,\lambda)$ is a $m \times m$ matrix with $ \lambda $ on the diagonal entries, one on all of the super-diagonal entries, and zero elsewhere. We will refer to a block diagonal matrix where each block is a Jordan block as  \textit{Jordan form}. 
\end{definition}

In what follows, we will use the following notation throughout the paper.
\begin{enumerate}
	\item 
	$ \mathrm{J}(m,\lambda  ): = \lambda \mathrm{I}_m  +  \mathrm{N}(m,\lambda)$,  where $ \mathrm{N}(m, \lambda)$ is the nilpotent part of \textit{Jordan  block }$ \mathrm{J}(m, \lambda )$.
	\item
	$  \mathrm{J}(\d(n),\,\lambda): = \lambda \mathrm{I}_n  +  \mathrm{N}(\d(n),\,\lambda) =
	\oplus_{ j=1}^{s}  \Big( \, \underset{t_{d_j}\text{-many}}{\underbrace{\mathrm{J}(d_j,\lambda ) \oplus \cdots \oplus \mathrm{J}(d_j,\lambda )}} \Big) $,  where $ \d(n)=[d_1^{t_{d_1}}, \dots , d_s^{t_{d_s}}] \in \PC(n)$.   In some places, we use the notation $\d_{\lambda}$ instead of $ \d(n)$. 
	
	\item $\lambda \mathrm{I}_n $ and $ \mathrm{N}(\d(n),\,\lambda)$  are the semi-simple part and the nilpotent part of the Jordan form $\mathrm{J}(\d(n),\,\lambda)$.  For given $\lambda \in \F $ and a partition of $n \in \N$, there is a unique Jordan form $ \mathrm{J}(\d(n),\,\lambda)$.
\end{enumerate}

\subsection{Preliminary results}  Here we will include some known results.
First,  we state a basic lemma without proof.
\begin{lemma}[cf.~{\cite[  Proposition 4.2,  Theorems 4.2, 4.3]{FZ}}]\label{lem-prop-adj}
	Let $A, B  \in  \mathrm{M}(n,\H)$.  Then
	\begin{enumerate}
		\item   $\Phi({AB}) \,=\,  \Phi({A}) \Phi({B}), \,\, \Phi (A+B) \,=\,  \Phi(A) +  \Phi(B), \,\,  \Phi(\overline{A}^T) \,=\, \overline{\Phi({A})}^T$ and \\
		$  \,\Phi(A^{-1}) \,
		= \,(\Phi(A))^{-1} $ if $A^{-1} $ exists. 
		\item $ \det_{\H}(A)  \in \R \ \hbox{and} \  \det_{\H}(A) \geq 0 $ for all $A  \in  \mathrm{M}(n,\H)$.  
		\item $ A$ is invertible if and only if $\Phi(A)$ is invertible. Also,  $A  \in  \mathrm{SL}(n,\H) $ if and only if  $ \Phi(A) \in \mathrm{SL}(2n,\C)$.
	\end{enumerate}
\end{lemma}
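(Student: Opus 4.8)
The statement in question is \lemref{lem-prop-adj}, which records standard properties of the embedding $\Phi\colon \mathrm{M}(n,\H)\to\mathrm{M}(2n,\C)$. Since this is a basic lemma whose proof the authors omit (citing \cite{FZ}), I will indicate how one would verify each claim directly. The plan is to exploit the fact that $\Phi$ is precisely the matrix realization of the left-multiplication action of $\mathrm{M}(n,\H)$ on $\H^n \cong \C^{2n}$, where we view $\H^n$ as a right $\C$-vector space via $\H = \C \oplus \C\jb$ and use the ordered $\C$-basis coming from the decomposition $v = v_1 + v_2\jb$ with $v_1, v_2 \in \C^n$.

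\textbf{Part (1).} First I would prove multiplicativity $\Phi(AB) = \Phi(A)\Phi(B)$. Writing $A = A_1 + A_2\jb$ and $B = B_1 + B_2\jb$ with $A_i, B_i \in \mathrm{M}(n,\C)$, the key computation is to expand $AB$ using $\jb c = \bar c\,\jb$ for $c \in \C$ (equivalently $\jb B_1 = \overline{B_1}\,\jb$ at the matrix level) and $\jb^2 = -1$, which gives $AB = (A_1 B_1 - A_2 \overline{B_2}) + (A_1 B_2 + A_2 \overline{B_1})\jb$; one then checks this matches the $(1,1)$ and $(2,1)$ blocks of the product of the two $2\times 2$ block matrices $\Phi(A)$ and $\Phi(B)$, and the other two blocks follow by the conjugate-symmetric shape of $\Phi$. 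Additivity $\Phi(A+B) = \Phi(A) + \Phi(B)$ is immediate from the block formula. For $\Phi(\overline{A}^T) = \overline{\Phi(A)}^T$, I would write $\overline{A}^T = \overline{A_1}^T + \overline{A_2}^T\jb$ — being careful that quaternionic conjugation sends $A_2\jb$ to $-\jb\,\overline{A_2} = -\overline{\overline{A_2}}\,\jb = \ldots$, so one must track the interaction of $\bar{\ }$, transpose, and the $\jb$-component carefully; then compare block-by-block with the conjugate transpose of $\Phi(A) = \begin{pmatrix} A_1 & A_2 \\ -\overline{A_2} & \overline{A_1}\end{pmatrix}$. Finally, $\Phi(A^{-1}) = \Phi(A)^{-1}$ when $A^{-1}$ exists follows formally from multiplicativity together with $\Phi(\mathrm{I}_n) = \mathrm{I}_{2n}$: apply $\Phi$ to $A A^{-1} = \mathrm{I}_n$.

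\textbf{Parts (2) and (3).} Part (3) is a direct consequence of part (1): if $A^{-1}$ exists then $\Phi(A)\Phi(A^{-1}) = \mathrm{I}_{2n}$ so $\Phi(A)$ is invertible; conversely, injectivity of the ring homomorphism $\Phi$ (clear from the block formula, since $\Phi(A) = 0$ forces $A_1 = A_2 = 0$) together with the fact that $\Phi$ has image a subalgebra on which invertibility can be detected — or more cleanly, one notes $\mathrm{M}(n,\H)$ is a simple algebra so a nonzero element $A$ with $\Phi(A)$ invertible cannot be a zero divisor, hence is invertible — gives the reverse implication. The statement about $\mathrm{SL}$ is then just the definition ${\det}_\H(A) := \det(\Phi(A))$ unwound. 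For part (2), that ${\det}_\H(A) = \det(\Phi(A))$ is real and non-negative, the standard argument is: the matrix $\Phi(\overline{A}^T) = \overline{\Phi(A)}^T$ shows $\det(\overline{\Phi(A)}^T) = \det(\Phi(A))$, i.e. $\overline{\det(\Phi(A))} = \det(\Phi(A))$, giving reality; for non-negativity one uses that $\Phi(A)$ commutes with the conjugation-type structure map $J = \begin{pmatrix} 0 & \mathrm{I}_n \\ -\mathrm{I}_n & 0\end{pmatrix}$ (indeed $\Phi(A)$ is exactly the centralizer condition $J\overline{\Phi(A)} = \Phi(A)J$), and a matrix in this centralizer has eigenvalues occurring in conjugate pairs $\{\mu, \bar\mu\}$, so its determinant is a product of terms $|\mu|^2 \geq 0$; the non-real eigenvalues pair up and the real eigenvalues of such a matrix have even multiplicity, so the total determinant is $\geq 0$.

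\textbf{Main obstacle.} The only genuinely delicate point is bookkeeping the anticommutation $\jb c = \bar c \jb$ consistently across products, transposes, and conjugates in Part (1), and — for Part (2) — justifying non-negativity rather than just reality of ${\det}_\H$, which requires the eigenvalue-pairing argument for matrices commuting with the quaternionic structure $J$ (equivalently, invoking that $\Phi(\mathrm{M}(n,\H))$ is the fixed algebra $\{M : JM = \overline{M}J\}$ and that such $M$ is $\C$-conjugate to a matrix of the block form with a clean determinant). Since the lemma is quoted from \cite{FZ}, in the write-up I would simply cite that reference for these verifications rather than reproduce them in full.
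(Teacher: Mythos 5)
The paper gives no argument for this lemma at all --- it is explicitly ``stated without proof'' with a citation to \cite{FZ} --- so there is nothing of the authors' to compare against; your sketch is the standard direct verification that the cited reference carries out, and it is essentially sound. Two corrections. In Part (1), the two components of $AB=(A_1B_1-A_2\overline{B_2})+(A_1B_2+A_2\overline{B_1})\jb$ match the $(1,1)$ and $(1,2)$ blocks of $\Phi(A)\Phi(B)$, not the $(2,1)$ block; the bottom row is then the conjugate of the top, as you say. More substantively, your reality argument in Part (2) is circular as written: the identity $\Phi(\overline{A}^T)=\overline{\Phi(A)}^T$ only yields $\det_{\H}(\overline{A}^T)=\det\bigl(\overline{\Phi(A)}^T\bigr)=\overline{\det\Phi(A)}$, which is not the claimed equality with $\det\Phi(A)$ unless reality is already known. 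The one-line repair is the intertwining relation you already invoke for non-negativity, namely $\overline{\Phi(A)}=J\,\Phi(A)\,J^{-1}$ with $J=\bigl(\begin{smallmatrix}0&\mathrm{I}_n\\ -\mathrm{I}_n&0\end{smallmatrix}\bigr)$, which gives $\overline{\det\Phi(A)}=\det\Phi(A)$ at once; alternatively, your eigenvalue-pairing argument already subsumes reality, since conjugate pairs contribute $|\mu|^2$ and each real eigenvalue has even multiplicity because its generalized eigenspace carries the antilinear map $v\mapsto J\bar v$, which commutes with $\Phi(A)$ and squares to $-\mathrm{I}$, forcing even complex dimension. The arguments for Parts (1) and (3) (multiplicativity by block computation, invertibility via the zero-divisor criterion or simply by noting $\Phi(A)$ and $A$ represent the same linear map on $\H^n\cong\C^{2n}$, and the $\mathrm{SL}$ statement by unwinding the definition of $\det_{\H}$) are correct as stated.
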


Next, we recall the well-known Jordan canonical form in $\mathrm{M}(n,\mathbb{F})$.
\begin{theorem}
	[{cf.~ \cite[Theorem 15.1.1,  Theorem 5.5.3]{rodman}}] \label{lem-Jordan-sl(n,C)} 
	For every $X \in  \mathrm{M}(n,\F)$, there is an invertible matrix $g \in  \mathrm{GL}(n,\F)$ such that 
	\begin{equation}\label{equ-Jordan-M(n,F)}
		gXg^{-1} =  \mathrm{J}(\d(m_{\lambda_1}),\,\lambda_1) \oplus  \cdots \oplus  \mathrm{J}(\d(m_{\lambda_p}),\,\lambda_p),
	\end{equation}
	where	
	\begin{itemize}
		\item[(\textit{i})]  $ \lambda_1,  \dots,  \lambda_p  $ are distinct complex numbers when  $\F= \C$,   
		\item[(\textit{ii})]  $ \lambda_1,  \dots,  \lambda_p  $ are distinct complex numbers  with non-negative imaginary parts when $\F= \H$.  
	\end{itemize}
	The form  \eqref{equ-Jordan-M(n,F)} is uniquely determined by $X$ up to a permutation of   Jordan blocks.
\end{theorem}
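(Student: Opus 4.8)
The plan is to establish the two cases $\F=\C$ and $\F=\H$ separately, deducing the quaternionic statement from the complex one via the embedding $\Phi$ of \eqref{definition-Phi-embedding}. In both cases existence amounts to producing a suitable ordered basis (a ``Jordan basis''), and uniqueness amounts to recovering the partitions $\d(m_{\lambda_i})$ from invariants of $X$.

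For $\F=\C$ I would run the classical argument: regard $\C^{n}$ as a module over the principal ideal domain $\C[t]$ with $t$ acting as $X$. This module is finitely generated and torsion (Cayley--Hamilton), so the structure theorem decomposes it into primary cyclic submodules $\C[t]/(t-\lambda)^{k}$. On such a summand, if $v$ is a cyclic generator, the ordered basis $\bigl((X-\lambda)^{k-1}v,\ldots,(X-\lambda)v,\,v\bigr)$ puts the restriction of $X$ in the Jordan block $\mathrm{J}(k,\lambda)$; grouping the summands with a common eigenvalue $\lambda_{i}$ yields $\mathrm{J}(\d(m_{\lambda_i}),\lambda_i)$ and the direct sum \eqref{equ-Jordan-M(n,F)}. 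Uniqueness up to permutation of blocks is the uniqueness of elementary divisors in the structure theorem; equivalently, the integers $\dim_{\C}\ker(X-\lambda\mathrm{I}_n)^{j}$ determine the partition attached to $\lambda$.

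For $\F=\H$ I would apply the case just proven to $\Phi(X)\in\mathrm{M}(2n,\C)$. The extra ingredient is the identity $Q\,\overline{\Phi(X)}\,Q^{-1}=\Phi(X)$, where $Q=\begin{pmatrix}0&-\mathrm{I}_{n}\\ \mathrm{I}_{n}&0\end{pmatrix}$; moreover $v\mapsto Q\bar v$ is a conjugate-linear map commuting with $\Phi(X)$ whose square is $-\mathrm{I}_{2n}$, i.e.\ a quaternionic structure compatible with $\Phi(X)$. Hence the complex Jordan data of $\Phi(X)$ at $\lambda$ and at $\bar\lambda$ coincide, the non-real eigenvalues occur in conjugate pairs $\{\lambda,\bar\lambda\}$, and on each real generalized eigenspace the compatible quaternionic structure forces every Jordan block size to occur with even multiplicity. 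Since $\Phi(\mathrm{J}(m,\lambda))$ is $\C$-conjugate to $\mathrm{J}(m,\lambda)\oplus\mathrm{J}(m,\bar\lambda)$ for $\lambda\in\C$, the complex Jordan form of $\Phi(X)$ is exactly $\bigoplus_{i}\bigl(\mathrm{J}(\d(m_{\lambda_i}),\lambda_i)\oplus\mathrm{J}(\d(m_{\lambda_i}),\bar\lambda_i)\bigr)$ with $\operatorname{Im}\lambda_i\ge 0$; it then remains to lift the conjugating complex matrix to some $g\in\GL(n,\H)$, which one does one primary block at a time, fusing the complex eigenvector chains at $\lambda_i$ and $\bar\lambda_i$ into a single $\H$-chain via $\H=\C\oplus\C\jb$. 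Uniqueness over $\H$ follows from the complex uniqueness together with the normalization $\operatorname{Im}\lambda_i\ge 0$, needed because $\mathrm{J}(m,\lambda)$ and $\mathrm{J}(m,\bar\lambda)$ are $\H$-conjugate.

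The step I expect to be the real obstacle is the quaternionic one: tracking the even-multiplicity phenomenon at real eigenvalues and actually assembling an honest quaternionic Jordan basis for $X$ (rather than merely matching complex invariant factors), since the noncommutativity of $\H$ obstructs the naive ``module over a PID'' reasoning. A cleaner but more technical alternative is to argue directly with the noncommutative principal ideal domain $\H[t]$ ($t$ central), whose relevant primes are $t-r$ for $r\in\R$ and the real quadratics $t^{2}-2(\operatorname{Re}\lambda)t+|\lambda|^{2}$; the companion matrix of a power of such a quadratic is $\H$-conjugate to a single block $\mathrm{J}(k,\lambda)$ with $\operatorname{Im}\lambda>0$, and the structure theory over $\H[t]$ then yields \eqref{equ-Jordan-M(n,F)} and its uniqueness directly. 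In either route I would ultimately refer to \cite{rodman} for the complete details.
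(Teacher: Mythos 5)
The paper does not prove this statement at all: it is quoted verbatim from Rodman's book (Theorems 5.5.3 and 15.1.1 of \cite{rodman}), so there is no ``paper proof'' to compare against, and what you have written is a reconstruction of the standard textbook argument. Your outline is correct. The complex case via the structure theorem for finitely generated torsion modules over $\C[t]$ is the classical proof, and both of your quaternionic routes are legitimate and appear in the literature: the descent through $\Phi$ using the conjugate-linear map $v\mapsto Q\bar v$ (which does satisfy $Q\overline{\Phi(X)}Q^{-1}=\Phi(X)$ and squares to $-\mathrm{I}_{2n}$, as you assert), and the module-theoretic argument over the noncommutative PID $\H[t]$, which is essentially what Rodman does. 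You have also correctly located the only step that genuinely requires care: for a non-real eigenvalue $\lambda$ the chains at $\lambda$ and $\bar\lambda$ live in independent generalized eigenspaces and fuse into $\H$-chains automatically via $\H=\C\oplus\C\jb$, whereas for a real eigenvalue one must choose the complex Jordan basis of the ($J$-stable) generalized eigenspace compatibly with the quaternionic structure, which is where the even-multiplicity phenomenon and the fiddly induction on nilpotency degree enter. One small point worth making explicit in the uniqueness discussion over $\H$: besides the normalization $\operatorname{Im}\lambda_i\ge 0$ (needed because conjugation by $\jb\mathrm{I}$ carries $\mathrm{J}(m,\lambda)$ to $\mathrm{J}(m,\bar\lambda)$), one should observe that the partition attached to $\lambda_i$ is recovered from the $\C$-dimensions $\dim_\C\ker(\Phi(X)-\lambda_i\mathrm{I})^j$, which are conjugation invariants of $X$ over $\H$ by \lemref{lem-prop-adj}; with that remark the uniqueness claim is fully justified. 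Since the paper treats this as a citation, deferring the remaining details to \cite{rodman}, as you do, is entirely appropriate.
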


The following result is also well-known, which classifies strongly ${\rm Ad} _{{\SL}(n,\C)}$-real  semi-simple element in $\mathfrak{sl}(n,\C)$.
\begin{proposition}
	\label{lem-semi-strongly-real-sl(n,C)} 
	An ${\rm Ad} _{{\SL}(n,\C)}$-real  semi-simple element in $\mathfrak{sl}(n,\C)$ is strongly ${\rm Ad} _{{\SL}(n,\C)}$-real if and only if either $0$ is an eigenvalue or $n \not\equiv 2 \pmod 4 .$	
\end{proposition}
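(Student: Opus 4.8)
The plan is to reduce to a concrete normal form and then build an explicit involution conjugating $X$ to $-X$, analyzing exactly when this can be done inside $\SL(n,\C)$ rather than only in $\GL(n,\C)$. Since $X$ is semisimple and $\mathrm{Ad}_{\SL(n,\C)}$-real, \thmref{thm-real} (or a direct argument) gives that the nonzero eigenvalues pair up as $\{\lambda,-\lambda\}$ with equal multiplicities, while $0$ may occur with multiplicity $p_o \geq 0$. After conjugating by a suitable element of $\GL(n,\C)$ (which is harmless for the reality question by \lemref{equi-SL-GL}), we may assume $X$ is block-diagonal: a block $\lambda \mathrm{I}_{m_\lambda} \oplus (-\lambda)\mathrm{I}_{m_\lambda}$ for each unordered pair $\{\lambda,-\lambda\}$ of nonzero eigenvalues, plus the block $\mathrm{O}_{p_o \times p_o}$ for the eigenvalue $0$.

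First I would treat the nonzero-eigenvalue blocks. On the block $\lambda \mathrm{I}_{m_\lambda} \oplus (-\lambda)\mathrm{I}_{m_\lambda}$, the matrix $w_\lambda := \begin{pmatrix} \mathrm{O} & \mathrm{I}_{m_\lambda} \\ \mathrm{I}_{m_\lambda} & \mathrm{O}\end{pmatrix}$ is an involution satisfying $w_\lambda (\lambda \mathrm{I} \oplus (-\lambda)\mathrm{I}) w_\lambda^{-1} = (-\lambda)\mathrm{I} \oplus \lambda \mathrm{I}$, i.e.\ it negates this block. Its determinant is $(-1)^{m_\lambda}$. On the $0$-block we may simply take $\mathrm{I}_{p_o}$, which is an involution fixing $\mathrm O_{p_o\times p_o}$ and has determinant $1$. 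Taking the direct sum $\tau_0$ of all the $w_\lambda$ and $\mathrm I_{p_o}$ gives an involution in $\GL(n,\C)$ with $\tau_0 X \tau_0^{-1} = -X$ and $\det \tau_0 = (-1)^{\sum_\lambda m_\lambda}$, where the sum runs over unordered pairs. If this determinant is $+1$ we are done. The remaining task is to fix up the sign when $\det\tau_0 = -1$ while keeping $\tau$ an involution in $\SL$; the degrees of freedom for this are (a) replacing $w_\lambda$ by $-w_\lambda$ (still an involution, changes its determinant by $(-1)^{2m_\lambda}=1$, so useless) or by $\mathrm{diag}(\mathrm I,-\mathrm I)$-type conjugates, and (b) modifying the $0$-block.

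The key case analysis is then: the total count $n \equiv 2\pmod 4$ is precisely the obstruction. If $0$ \emph{is} an eigenvalue, i.e.\ $p_o \geq 1$, then I would exhibit on the $0$-block an involution of determinant $-1$ — for instance $\mathrm{diag}(-1,1,\dots,1)$ when $p_o \geq 1$ — which, combined with $\tau_0$, produces an involution of determinant $+1$ conjugating $X$ to $-X$; so $X$ is strongly real regardless of $n \bmod 4$. If $0$ is \emph{not} an eigenvalue, then $n = 2\sum_\lambda m_\lambda$, so $\det\tau_0 = (-1)^{n/2}$, which is $+1$ iff $n\equiv 0\pmod 4$; and in the case $n \equiv 2\pmod 4$ one must show the sign genuinely cannot be corrected: any $g\in\GL(n,\C)$ with $gXg^{-1}=-X$ permutes the eigenspaces, necessarily swapping the $\lambda$- and $(-\lambda)$-eigenspaces (each of dimension $m_\lambda$) for every $\lambda$, which forces $\det g$ to lie in a fixed coset; more carefully, writing $g$ in block form relative to the eigenspace decomposition and using that $g$ must be block-anti-diagonal on each $\{\lambda,-\lambda\}$ pair, one computes $\det g = \pm\prod_\lambda (\pm\det(g_\lambda)\det(g_{-\lambda}))$ and, for $g$ an involution, the involution condition on each $2\times 2$ block forces the determinant contribution of each pair to be $(-1)^{m_\lambda}$ up to squares, yielding $\det g \in (-1)^{n/2}(\C^\times)^{?}$ — the upshot being that no involution in $\SL$ can do the job. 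The cleanest way to run this last step is: suppose $\tau\in\SL(n,\C)$ is an involution with $\tau X\tau^{-1}=-X$; decompose $\C^n$ into $\pm1$ eigenspaces $V_+\oplus V_-$ of $\tau$; then $X$ maps $V_\pm\to V_\mp$, so $X|_{V_+}\colon V_+\to V_-$ and $X|_{V_-}\colon V_-\to V_+$ are inverse-like isomorphisms on the nonzero-eigenvalue part, forcing $\dim V_+ = \dim V_-$ there, hence the nonzero part has dimension $\equiv 0 \pmod 2$ contributed evenly — and a parity bookkeeping shows $\det\tau = (-1)^{\dim V_-}$ with $\dim V_-$ constrained so that $\det\tau=1$ is possible exactly when $n\not\equiv 2\pmod 4$ or $0$ is an eigenvalue.

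The main obstacle I anticipate is the necessity direction in the case $n\equiv 2\pmod 4$ with $0$ not an eigenvalue: showing that \emph{no} involution in $\SL(n,\C)$ works, not merely that the natural candidate fails. This requires the structural observation that an involution $\tau$ negating a semisimple $X$ must interchange the $\lambda$- and $(-\lambda)$-eigenspaces and, on each such pair, restrict to a $2\times2$-block involution whose determinant is pinned down modulo squares in $\C^\times$; assembling these across all pairs gives $\det\tau \equiv (-1)^{n/2}$, which is $-1$ under our hypothesis. Everything else — the explicit $\tau_0$, the determinant computation, and the correction using the $0$-block — is routine linear algebra over $\C$.
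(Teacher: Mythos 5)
Your proposal is correct and takes essentially the same route as the paper (which defers to \cite[Proposition 4.7]{GM} and rests on diagonalizing $X$): put $X$ in diagonal form, build the block-swap involution $\tau_0$ with $\det\tau_0=(-1)^{\sum_\lambda m_\lambda}$, correct the sign on the zero block when $p_o\ge 1$, and extract the obstruction from a determinant parity count when $p_o=0$. Of the two necessity arguments you sketch, keep the $V_+\oplus V_-$ one: since $X$ is invertible and anticommutes with the involution $\tau$, it interchanges the $\pm1$-eigenspaces of $\tau$, forcing $\dim V_-=n/2$ and hence $\det\tau=(-1)^{n/2}=-1$ when $n\equiv 2\pmod 4$, which closes the argument cleanly without the ``up to squares'' bookkeeping.
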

\begin{proof}
	The proof follows from the fact that any semi-simple element in $ \s\l(n,\C) $ is conjugate to a diagonal matrix.
\end{proof}

Let us recall the following results from \cite{GM}, which classifies strongly ${\rm Ad} _{{\SL}(n,\F)}$-real  nilpotent element in $\mathfrak{sl}(n,\F)$, where $\F =\C$ or $\H$.

\begin{theorem}[{cf.~\cite[Theorem 5.3]{GM}}]\label{lem-nil-strongly-real-sl(n,H)}  
	Every nilpotent element in $ \mathfrak{sl}(n,\H) $ is strongly $ {\rm Ad} _{{\SL}(n,\H)}$-real. 
\end{theorem}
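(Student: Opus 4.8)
The plan is to reduce to a Jordan canonical form and then exhibit an explicit sign-reversing involution. First I would observe that a nilpotent matrix is automatically trace-free, so by \thmref{lem-Jordan-sl(n,C)} any nilpotent $X\in\mathfrak{sl}(n,\H)$ can be written as $X=g\,\mathrm{N}(\d(n),0)\,g^{-1}$ for some $g\in\GL(n,\H)$ and some partition $\d(n)$ of $n$. Hence it is enough to find an involution $\tau\in\SL(n,\H)$ with $\tau\,\mathrm{N}(\d(n),0)\,\tau^{-1}=-\mathrm{N}(\d(n),0)$: indeed $g\tau g^{-1}$ is then an involution conjugating $X$ to $-X$, and since $\det_{\H}=\det\circ\Phi$ is multiplicative by \lemref{lem-prop-adj} one gets $\det_{\H}(g\tau g^{-1})=\det_{\H}(\tau)=1$, so $g\tau g^{-1}\in\SL(n,\H)$ and $X$ is strongly $\mathrm{Ad}_{\SL(n,\H)}$-real.

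Next I would build $\tau$ block by block. Writing $\d(n)=[d_1^{t_{d_1}},\dots,d_s^{t_{d_s}}]$, the nilpotent Jordan form $\mathrm{N}(\d(n),0)$ is the direct sum of the blocks $\mathrm{N}(d_j,0)$, each with multiplicity $t_{d_j}$. For each $m\in\N$ set $W_m:=\mathrm{diag}\big(1,-1,1,\dots,(-1)^{m-1}\big)$, a real $\pm1$ diagonal matrix regarded as an element of $\M(m,\H)$; a one-line entrywise computation gives $W_m\,\mathrm{N}(m,0)\,W_m^{-1}=-\mathrm{N}(m,0)$ and $W_m^2=\mathrm{I}_m$. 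Letting $\tau$ be the block-diagonal matrix obtained from $\mathrm{N}(\d(n),0)$ by replacing every block $\mathrm{N}(d_j,0)$ with $W_{d_j}$, we obtain an involution with $\tau\,\mathrm{N}(\d(n),0)\,\tau^{-1}=-\mathrm{N}(\d(n),0)$. It then remains to check $\tau\in\SL(n,\H)$: since $\tau$ is real we have $\Phi(\tau)=\tau\oplus\tau$, so $\det_{\H}(\tau)=\det(\tau\oplus\tau)=(\det\tau)^2=1$ because $\det\tau=\pm1$.

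I do not expect a genuine obstacle here; the only point that needs care is the determinant condition built into $\SL(n,\H)$. Over $\C$ the analogous alternating-sign involution may have determinant $-1$, which is precisely what forces a correction and produces the $\widetilde{\mathcal{P}_e}$-type obstruction in the complex case treated in \thmref{thm-strongly-real-sl(n,C)}; over $\H$ this never happens, because the quaternionic determinant of a real matrix is the square of its ordinary determinant. So the heart of the matter is this observation together with the conjugation-transfer step in the first paragraph. Alternatively, the reversing involution can be described using the ordered basis attached to a nilpotent element in \cite[Section 3.5]{GM}, as is done in the more delicate cases later; the alternating-sign diagonal is chosen here simply because it makes the determinant computation transparent.
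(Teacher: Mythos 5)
Your proof is correct and follows essentially the same route as the paper: reduce to the Jordan form $\mathrm{N}(\d(n),0)$ and conjugate by an alternating-sign $\pm1$ diagonal involution, which lies in $\SL(n,\H)$ because the quaternionic determinant of a real matrix is the square of its ordinary determinant. Your block-by-block construction of $\tau$ and the explicit determinant and conjugation-transfer checks are just a more detailed write-up of the paper's one-line argument.
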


Recall that any nilpotent element $X\in\mathfrak{sl}(n,\C) $ has a unique Jordan canonical form $ \mathrm{N}(\d(n),\,0)$, where $ \d(n) $ is the corresponding partition of $ n $. This is the correspondence between  $X$ and the partition $\d(n)$ of $n$.

\begin{theorem}[{cf.~\cite[Theorem 4.6]{GM}}]\label{lem-nil-strongly-real-sl(n,C)}
	Let $ X \in \mathfrak{sl}(n,\C) $ be a nilpotent element,  and $ {\d}(n) \in \mathcal{P}(n) $ be the corresponding partition.  Then $X$ is strongly $ {\rm Ad}_{{\SL}(n,\C)} $-real if and only if $ {\d} (n)\not\in  \widetilde{ \mathcal{P}_e}(n)$, where $  \widetilde{ \mathcal{P}_e}(n) $ is as  defined in \eqref{definition-P-e-tilde}.
\end{theorem}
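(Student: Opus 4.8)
The plan is to reduce the statement to an explicit matrix computation by fixing a convenient ordered basis adapted to the partition $\d(n)$, and then to analyze the determinant constraint coming from $\SL$ carefully. First I would use \thmref{lem-Jordan-sl(n,C)} to assume $X = \mathrm{N}(\d(n),0)$ with $\d(n) = [d_1^{t_{d_1}},\dots,d_s^{t_{d_s}}]$. The basic observation is that $-X$ is conjugate to $X$ in $\GL(n,\C)$ always (nilpotent matrices with the same Jordan type are conjugate), so the issue is whether the conjugating element — or rather an \emph{involution} doing the job — can be found inside $\SL(n,\C)$. For a single Jordan block $\mathrm{N}(m,0)$, the anti-diagonal reversal matrix $W_m$ with alternating signs (so that $W_m \mathrm{N}(m,0) W_m^{-1} = -\mathrm{N}(m,0)$ and $W_m^2 = \mathrm{I}_m$) is the natural candidate; one computes $\det W_m = (-1)^{\lfloor m/2\rfloor}$ up to a sign I would pin down. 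Assembling these blockwise gives an involution $\tau_0$ with $\tau_0 X \tau_0^{-1} = -X$, and $\det \tau_0 = \prod_i \big((-1)^{\lfloor d_i/2 \rfloor}\big)^{t_{d_i}}$; the element $\tau_0$ lies in $\SL$ precisely when this product is $+1$.

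The heart of the argument is what to do when $\det\tau_0 = -1$. Here I would exploit the structure of $\ZC_{\GL(n,\C)}(X)$ and the fact that we have freedom to modify $\tau_0$ by centralizing elements: any $\tau = c\,\tau_0$ with $c$ in the centralizer and satisfying $\tau^2 = \mathrm I_n$ also reverses $X$. The question becomes: can we choose such a $c$ with $\det c = -1$ while keeping $(c\tau_0)^2 = \mathrm I_n$? This is a linear-algebra problem inside the centralizer, which is a product of generalized (block-upper-triangular) matrix algebras indexed by the distinct part sizes $d_i$. I expect that when \emph{some} $t_{d_i}$ is odd with $d_i$ odd, or when not all parts are even, one has enough room to produce the needed sign change; the genuine obstruction — and this is where the set $\widetilde{\mathcal P_e}(n)$ enters — arises exactly when $\d(n)$ is an even partition (all $d_i$ even) in which the correction forced by parity of $\sum_{d_i \equiv 2 \,(4)} t_{d_i}$ cannot be absorbed by any admissible involution. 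Spelling this out via \eqref{definition-P-e-tilde}: when all parts are even, $\det\tau_0 = \prod_i (-1)^{(d_i/2)\,t_{d_i}}$, and modulo $2$ the exponent reduces to $\sum_{d_i \equiv 2\,(4)} t_{d_i}$, so $\det\tau_0 = -1$ iff $\d(n)\in$ (even partitions with that sum odd); the \emph{very even} case and the "not all $t_{d_i}$ even" case are precisely where a different involution can be substituted.

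So the key steps, in order, are: (1) normalize $X = \mathrm N(\d(n),0)$; (2) build the blockwise reversal involution $\tau_0$ and compute $\det\tau_0$ explicitly in terms of the $d_i, t_{d_i}$; (3) if $\det\tau_0 = +1$ conclude immediately; (4) if $\det\tau_0 = -1$, determine whether the centralizer $\ZC_{\GL(n,\C)}(X)$ contains an element $c$ with $\det c = -1$ such that $c\tau_0$ is still an involution conjugating $X$ to $-X$ — this succeeds outside $\widetilde{\mathcal P_e}(n)$ (using, e.g., permutations of equal-sized blocks when some $t_{d_i}$ is odd, or odd-size-block phenomena when $\N_{\d(n)} \neq \E_{\d(n)}$); (5) conversely, show that for $\d(n)\in\widetilde{\mathcal P_e}(n)$ \emph{no} involution in $\SL(n,\C)$ can reverse $X$ — this direction requires showing that every involution $\tau$ with $\tau X\tau^{-1} = -X$ has $\det \tau = -1$, which I would do by decomposing the space under $\tau$ and $X$ simultaneously into indecomposable $\tau$-stable, $X$-compatible pieces and tracking the determinant on each, using the adapted ordered basis machinery of \cite[Section 3.5]{GM}. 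The main obstacle is step (5), the converse: ruling out \emph{all} reversing involutions (not just the standard one) forces a genuine structural classification of $\langle \tau, X\rangle$-modules and a parity bookkeeping that matches \eqref{definition-P-e-tilde}; this is where I expect the real work, and where the precise definition of $\widetilde{\mathcal P_e}(n)$ is forced upon us rather than guessed.
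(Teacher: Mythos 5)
You are taking essentially the same route as the paper: normalize to $X=\mathrm N(\d(n),0)$, build a blockwise sign-alternating involution, compute its determinant as $(-1)^{\sum_{\eta\in\E^2_{\d(n)}}t_\eta}$ after absorbing the odd-part signs, correct by a determinant $-1$ modification supported on a single odd part when one exists, and for the converse use the adapted ordered basis of \cite[Section~3.5]{GM} to show that any reversing involution is block upper triangular with diagonal blocks occurring in $\pm$ pairs ($d_j/2$ times each), which forces its determinant to be $(-1)^{\sum_{\eta\in\E^2_{\d(n)}}t_\eta}=-1$ when $\d(n)\in\widetilde{\mathcal{P}_e}(n)$. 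The only slip is cosmetic: the single-block reverser is the diagonal matrix $\mathrm{diag}(1,-1,1,\dots)$, not an anti-diagonal reversal with alternating signs (the latter squares to $-\mathrm I_m$ for $m$ even and conjugates $\mathrm N(m,0)$ to $-\mathrm N(m,0)^{T}$); with that correction your formula $\det W_m=(-1)^{\lfloor m/2\rfloor}$ and the remaining parity bookkeeping agree with the paper's.
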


\section{Some Useful Lemmas}\label{sec-lemma}
The first result gives a relation between   $ {\rm Ad}_{{\rm SL }(n,\F)}$-real and  $ {\rm Ad}_{{\rm GL }(n,\F)}$-real elements in $ \s\l(n,\F) $,  where $\mathbb{F} = \C$ or $ \H$. 
\begin{lemma}\label{equi-SL-GL}
	An element $X$ of $\mathfrak{sl}(n,\F)$ is $ {\rm Ad}_{{\rm SL }(n,\F)} $-real  if and only if it is $ {\rm Ad}_{{\rm GL }(n,\F)} $-real.
\end{lemma}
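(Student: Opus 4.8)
The plan is as follows. One implication is free: since $\mathrm{SL}(n,\F)$ is a subgroup of $\mathrm{GL}(n,\F)$, a conjugator realizing $\mathrm{Ad}_{\mathrm{SL}(n,\F)}$-reality of $X$ also realizes $\mathrm{Ad}_{\mathrm{GL}(n,\F)}$-reality. So I only need the converse, and the idea is to take a conjugator $g \in \mathrm{GL}(n,\F)$ with $gXg^{-1} = -X$ and rescale it by a central scalar matrix — which lies in $\mathcal{Z}_{\mathrm{GL}(n,\F)}(X)$ and hence does not disturb the relation $gXg^{-1}=-X$ — so as to normalize its determinant to $1$.

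For $\F = \C$: the scalar matrices $c\,\mathrm{I}_n$, $c \in \C^{\times}$, are central in $\mathrm{GL}(n,\C)$, and $\det(c\,g) = c^{n}\det g$. Since $\C$ is algebraically closed, I can choose $c$ with $c^{n} = (\det g)^{-1}$; then $g' := c\,g \in \mathrm{SL}(n,\C)$ still satisfies $g'Xg'^{-1} = gXg^{-1} = -X$, so $X$ is $\mathrm{Ad}_{\mathrm{SL}(n,\C)}$-real. For $\F = \H$: now only the \emph{real} scalars $r\,\mathrm{I}_n$, $r \in \R^{\times}$, are central in $\mathrm{M}(n,\H)$, and by \lemref{lem-prop-adj} the relevant determinant $\det_{\H} = \det \circ \Phi$ is multiplicative with $\det_{\H}(r\,\mathrm{I}_n) = \det(r\,\mathrm{I}_{2n}) = r^{2n}$, and it takes values in $\R_{>0}$ on $\mathrm{GL}(n,\H)$. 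Writing $d := \det_{\H} g \in \R_{>0}$, I pick $r \in \R_{>0}$ with $r^{2n} = d^{-1}$ — possible because $t \mapsto t^{2n}$ maps $\R_{>0}$ onto $\R_{>0}$ — and set $g' := r\,g$; then $\det_{\H} g' = r^{2n}\,d = 1$ and $g'Xg'^{-1} = gXg^{-1} = -X$ since $r$ is central, so $g' \in \mathrm{SL}(n,\H)$ witnesses $\mathrm{Ad}_{\mathrm{SL}(n,\H)}$-reality of $X$.

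There is no genuine obstacle here: the argument is a routine use of the fact that centralizers contain a scalar subgroup. The only point demanding a little care is keeping the $\C$- and $\H$-cases straight — over $\H$ one must remember that only real scalars are central (quaternionic, or even complex, scalars need not commute with $g$ or with $X$) and that $\det_{\H}$ is a non-negative real, so that surjectivity of $r \mapsto r^{2n}$ onto $\R_{>0}$ is exactly the input required, playing the role that algebraic closedness of $\C$ plays in the complex case.
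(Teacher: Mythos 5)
Your proof is correct and follows essentially the same route as the paper: rescale a $\mathrm{GL}(n,\F)$-conjugator by a central scalar matrix to normalize the determinant, using algebraic closedness of $\C$ in one case and positivity of $\det_\H$ in the other. If anything, your quaternionic normalization is the more careful one — since $\det_\H(r\,\mathrm{I}_n)=r^{2n}$, the correct scalar is $r=(\det_\H g)^{-1/(2n)}$ as you take it, whereas the paper's stated choice $\alpha=(\det g)^{-1/n}$ only gives $\det_\H(g\,\alpha\mathrm{I}_n)=(\det_\H g)^{-1}$ and needs this same adjustment.
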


\begin{proof}
	Let $gXg^{-1}=-X$ hold for some $g \in \mathrm{GL}(n,\F)$. Now if $\det (g) =1$ then we are done. Suppose that $\det (g) \ne1$. Recall that in the case $ \F=\H $, $ \det(g) \in \R$ and $ \det(g)>0 $; see \lemref{lem-prop-adj}(2). Let $ \alpha:= \frac{1}{(\det(g))^{1/n}}$. Then $ \alpha{\rm I}_n$ lies in the center of ${\GL}(n,\F)$.  Now 	
	$g(\alpha{\rm I}_n)X  (\alpha{\rm I}_n)^{-1} g^{-1}\,= \,-X$, and $ \det (g(\alpha{\rm I}_n))=1$. Hence, the proof follows.
\end{proof}

In the following example, we will describe a conjugating element $ g $ for an $ {\rm Ad}_{{\rm GL }(n,\C)} $-real nilpotent element $ X $, cf.  \cite[Example 3.11]{GM}. 
\begin{example}
	Let $ X\in \s\l(8,\C) $ be a nilpotent element which corresponds to the partition $ [4,2^2] $ of $ 8 $. There exists $ v_1,v_2,v_3\in\C^8 $ such that the vector space $ \C^8 $ has a basis of the form $ \{v_1, Xv_1, X^2v_1, X^3v_1,\, v_2, Xv_2,\,v_3, Xv_3 \} $. We now re-ordered the above basis as follows:
	\begin{align}\label{basis-B-exmp}
		\BC:=\{ X^3v_1, Xv_2,  Xv_3, X^2v_1, v_2, v_3, Xv_1, v_1  \} = (u_1,\dots, u_8)\, , \,({\rm say}).
	\end{align}
	If  $g\in {\GL}(\C^8)$ such that  $-X=g X g^{-1}$, then $ g$ is completely determined by its action on $ v_i $ for $ i=1,2,3 $. Write $ g (v_r)\,=\, \sum_{i,\,l} c^l_{i\,r} X^lv_i$ for $ r=1,2,3 $, where $ c^l_{i\,l} \in \C$. Then
	\begin{align*}
		&~ g (v_1)\,=\,  c^3_{1\,1} X^3v_1 \,  + \, c^1_{2\,1}Xv_2 \, + \,  c^1_{3\,1} Xv_3 \,  + \,  c^2_{1\,1} X^2v_1 \, + \, c^0_{2\,1} v_2 \, +\,  c^0_{3\,1} v_3 \, + \,  c^1_{1\,1} X v_1\, + \, c^0_{1\,1} v_1\,,\\
		& ~ g (v_2)\,=\,  c^3_{1\,2} X^3v_1 \,  + \, c^1_{2\,2}Xv_2 \, + \,  c^1_{3\,2} Xv_3 \,  + \,  c^2_{1\,2} X^2v_1 \, + \, c^0_{2\,2} v_2 \, +\,  c^0_{3\,2} v_3 \, + \,  \magenta{c^1_{1\,2}} X v_1\, + \, \magenta{c^0_{1\,2}} v_1\,,\\
		& ~ g (v_3)\,=\,  c^3_{1\,3} X^3v_1 \,  + \, c^1_{2\,3}Xv_2 \, + \,  c^1_{3\,3} Xv_3 \,  + \,  c^2_{1\,3} X^2v_1 \, + \, c^0_{2\,3} v_2 \, +\,  c^0_{3\,3} v_3 \, + \,  \magenta{c^1_{1\,3}} X v_1\, + \, \magenta{c^0_{1\,3}} v_1\,.
	\end{align*}
	Using the relations $-X=g X g^{-1}$, and $ X^4v_1=0,\,X^2v_i=0,\, i=2,3$; it follows that 
	\begin{enumerate}
		\item $~ g (X v_1)\,= - X g (v_1) \, = - (\, c^2_{1\,1} X^3v_1 \, + \, c^0_{2\,1} Xv_2 \, +\,  c^0_{3\,1} Xv_3 \, + \,  c^1_{1\,1} X^2 v_1\, + \, c^0_{1\,1}X v_1)$.
		\item $~ g (X v_2)\,= - X g (v_2) \, = - (\, c^2_{1\,2} X^3v_1 \, + \, c^0_{2\,2} Xv_2 \, +\,  c^0_{3\,2} Xv_3 \, + \,  c^1_{1\,2} X^2 v_1\, + \, c^0_{1\,2}X v_1)$.
		\item $~ g (X v_3)\,= - X g (v_3) \, = - (\, c^2_{1\,3} X^3v_1 \, + \, c^0_{2\,3} Xv_2 \, +\,  c^0_{3\,3} Xv_3 \, + \,  c^1_{1\,3} X^2 v_1\, + \, c^0_{1\,3}X v_1)$.
		
		\item $~ g (X^2 v_1)\,= \,   c^1_{1\,1} X^3 v_1\, + \, c^0_{1\,1}X^2 v_1$.
		\item $~ g (X^3 v_1)\,=\, -c^0_{1\,1} X^3v_1$.
	\end{enumerate}
	Since $g (X^{2} v_2) = g (X^{2} v_3) =0$,  we have $ c^1_{1\,2}  = c^0_{1\,2} =  c^1_{1\,3} = c^0_{1\,3} =0$. 
	Observe that 
	$$
	g\big( \text{Span}_\C\{u_1,\dots, u_i\} \big) \, \subseteq  \text{ Span}_\C\{u_1,\dots, u_i\}\,, \quad {\rm for\, } \  i=1, \dots 8 \,;
	$$
	where the ordered basis $ (u_1,\dots,u_8) $ is as in \eqref{basis-B-exmp}. 
	In particular, the matrix $ [g]_\BC $ has the following form:
	$$
	[g]_\BC\,=\,
	\begin{pmatrix}
		\blue{-c^0_{1\,1}} & -c^2_{1\,2}  & -c^2_{1\,3} &   c^1_{1\,1} &   c^3_{1\,2} &    c^3_{1\,3} & -c^2_{1\,1}  & c^3_{1\,1}      
		\\
		& \violet{ -c^0_{2\,2}} & \violet{  -c^0_{2\,3}} & 0 &  c^1_{2\,2}  & c^1_{2\,3}  & -c^0_{2\,1}  & c^1_{2\,1}  
		\\
		&\violet{  -c^0_{3\,2}} & \violet{ -c^0_{3\,3} }& 0 & c^1_{3\,2}  & c^1_{3\,3}  & - c^0_{3\,1} & c^1_{3\,1} 
		\\  
		&   & &  \blue{c^0_{1\,1}}  & c^2_{1\,2} & c^2_{1\,3} &- c^1_{1\,1} & c^2_{1\,1}   
		\\  
		& &  &   &\violet{  c^0_{2\,2}}  & \violet{ c^0_{2\,3}} & 0 & c^0_{2\,1} 
		\\
		&  &   &   &\violet{  c^0_{3\,2}} & \violet{ c^0_{3\,3}} &0 &c^0_{3\,1}
		\\
		& &  &   &   & & \blue{-c^0_{1\,1}}   & c^1_{1\,1}   
		\\
		& &  &   &   &  & &  \blue{c^0_{1\,1}} 	  
	\end{pmatrix} \,.$$
	\qed
\end{example}

In the proof of the next result,  we generalize the above description of the conjugating elements in \eqref{matrix-g-in-basis}; see also  \cite[Section 3.5]{GM}.
The following result will be used in Lemma \ref{lem-partition-sl(n,H)}.
\begin{lemma} \label{lem-nilpotent-reverser}
	Let $X \in \mathfrak{sl}(n,\C)$ be a nilpotent element and  $\d(n)= [d_1^{t_{d_1}}, \dots,d_s^{t_{d_{s}}} ]\in \PC(n) $  be the associated partition for the nilpotent element $ X $.   Let $g\in \GL(n,\C) $ be such that $g$ satisfies the following properties:
	\begin{enumerate}
		\item $ gX\,=\,-Xg $,
		\item $ g\bar{g}\, =\, -{\rm I}_n  $.
	\end{enumerate}
	Then $ t_{d_{j}} $ is even for all $ j=1, \dots , s $.
\end{lemma}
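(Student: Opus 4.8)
The plan is to use the identity $g\bar g=-\mathrm I_n$ to build a quaternionic structure anticommuting with $X$, and then to extract a parity constraint from the Jordan block count. I would work with $X$ in its Jordan form $X=\mathrm N(\d(n),0)$, as in the worked example above, so that $X$ is a real matrix; write $\sigma\colon\C^n\to\C^n$, $\sigma(v)=\bar v$, for entrywise conjugation, which is a $\C$-antilinear involution satisfying $\sigma X=X\sigma$. (Taking $X$ to be real, with the adapted basis $\{X^lv_i^{d_j}\}$ chosen from reordered standard basis vectors, is precisely what makes $\bar g$, and hence $g\bar g$, interact correctly with that basis.)

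The key construction is $J:=g\circ\sigma$. Since $g$ is $\C$-linear and $\sigma$ is $\C$-antilinear, $J$ is $\C$-antilinear, and using $\sigma g\sigma=\bar g$ (as $\C$-linear maps) one gets $J^2=g(\sigma g\sigma)=g\bar g=-\mathrm I_n$; so $J$ is a quaternionic structure on $\C^n$. Combining $\sigma X=X\sigma$ with $gX=-Xg$ gives
$$JX=g\sigma X=gX\sigma=-Xg\sigma=-XJ,$$
so $J$ anticommutes with $X$, hence $JX^k=(-1)^kX^kJ$ and $J(\ker X^k)\subseteq\ker X^k$ for every $k\ge1$. Thus $J$ restricts to a quaternionic structure on each $\ker X^k$, forcing $\dim_\C\ker X^k$ to be even for every $k$.

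To conclude I would invoke the standard dictionary for nilpotent Jordan forms: the number of Jordan blocks of $X$ of size $\ge k$ equals $r_k:=\dim_\C\ker X^k-\dim_\C\ker X^{k-1}$, and the number $t_k$ of blocks of size exactly $k$ equals $r_k-r_{k+1}$. Since all the $\dim_\C\ker X^k$ are even, every $r_k$ is even, hence every $t_k$ is even; in particular $t_{d_1},\dots,t_{d_s}$ are all even.

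I expect the only delicate point to be the opening normalization and bookkeeping that makes $g\bar g=-\mathrm I_n$ translate cleanly into $J^2=-\mathrm I_n$. Equivalently, following the matrix route hinted at in the text, one writes $[g]_{\mathcal B}$ in the reordered adapted basis $\mathcal B$ as a block-upper-triangular matrix whose diagonal blocks are $(-1)^{d-k}G_d$ with $G_d\in\GL(t_d,\C)$ (generalizing \eqref{matrix-g-in-basis}); identifying these diagonal blocks is the real work, after which $g\bar g=-\mathrm I_n$ forces $G_d\overline{G_d}=-\mathrm I_{t_d}$, and taking determinants yields $|\det G_d|^2=(-1)^{t_d}>0$, so $t_d$ is even. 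Either way, the concluding arithmetic is immediate; the substance is the structural setup.
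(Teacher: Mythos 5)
Your main argument is correct and takes a genuinely different route from the paper's. The paper works entirely inside the adapted ordered basis $\BC$ of \eqref{old-ordered-basis}: it shows $[g]_\BC$ is block upper triangular with leading diagonal blocks $g_{jj}$ of size $t_{d_j}\times t_{d_j}$, deduces $g_{jj}\overline{g_{jj}}=-\mathrm{I}_{t_{d_j}}$ from $g\bar g=-\mathrm{I}_n$, and takes determinants --- that is, exactly your secondary ``matrix route''. Your primary route instead packages $g\bar g=-\mathrm{I}_n$ into the antilinear map $J=g\sigma$ with $J^2=-\mathrm{I}_n$, observes that $J$ anticommutes with $X$ and hence preserves each $\ker X^k$, concludes that each $\dim_\C\ker X^k$ is even because a quaternionic structure forces even complex dimension, and reads off the parity of the $t_{d_j}$ from the standard count $t_k=r_k-r_{k+1}$ with $r_k=\dim_\C\ker X^k-\dim_\C\ker X^{k-1}$; all steps check out. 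This buys a coordinate-free argument that bypasses the basis bookkeeping which is the delicate part of the paper's proof, at the cost of needing $\sigma X=X\sigma$, i.e.\ $X$ real. One caveat you should state explicitly (you only gesture at it): the normalization $X=\mathrm{N}(\d(n),0)$ is not a harmless ``WLOG'', because the hypothesis $g\bar g=-\mathrm{I}_n$ is not preserved under conjugation by a non-real matrix --- one can check that the lemma as literally stated fails for suitable non-real nilpotent $X$ already in $\s\l(2,\C)$. The paper's proof makes the same implicit normalization, and since the lemma is only invoked (in Lemma~\ref{lem-partition-sl(n,H)}) for $X=\mathrm{N}(\d(n),0)$, both arguments prove exactly the statement that is actually needed.
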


\begin{proof}
	Given a nilpotent element $ X$ in $\mathfrak{sl}(n,\C)$, the underlying vector space $ \C^n $ has a basis of the form $ \{X^lv_i^{d_j}\, \mid\, 1\leq j\leq s,\, 1\leq i\leq t_{d_j},\, 0\leq l<d_j\} $,  where $ [d_1^{t_{d_1}}, \dots,d_s^{t_{d_s}} ]\in \PC(n) $ is the associated partition for $ X $, \cite[Chapter IV (1.6), p. E-83]{SS}.	Motivating by an ordering as done in \cite[(4.4)]{BCM}, we will consider here the following ordering as in \cite[Section 3.5]{GM}.
	Set 
	$\BC^l (d_j) \,:=\, (X^l v_1^{d_j},\, \ldots ,\,X^l v^{d_j}_{t_j})$. 
	Define
	\begin{equation}\label{old-ordered-basis}
		\BC(j) \,:=\, \BC^{d_1-j} (d_1) \vee    \cdots \vee  \BC^{d_s-j} (d_s)  ,\ \text{ and }\  \BC\,:=\, \BC(1) \vee \cdots \vee  \BC(d_1)\, .
	\end{equation}
	Then  $\BC$ is an ordered basis of  $ \C^n $. From the definition of $ \BC(1), X|_{ \BC(1)}=0$. Using the relation $(-X)^r=g X^r g^{-1}$, $r\in \N$, it follows that 
	\begin{enumerate}
		\item $g\big({\rm Span}_{\C}\{\BC^{d_1-1}(d_1)\vee \cdots\vee \BC^{d_j-1}(d_j) \}\big) \,\subseteq \, {\rm Span}_\C\{\BC^{d_1-1}(d_1)\vee \cdots\vee \BC^{d_j-1}(d_j) \} $\\  for $ j=1,\ldots, s $.
		\item $g\big( \text{Span}_\C\{\BC(1)\vee \cdots \vee \BC(m) \}\big) \,\subseteq \, \text{Span}_\C\{\BC(1)\vee \cdots \vee \BC(m) \}$ for $m=1,\ldots , d_1$. 
	\end{enumerate}
	Also,  $g$ is determined uniquely by its action on $ v_i^{d_j} $ for $  1\leq j\leq s,\, 1\leq i\leq t_{d_j} $. 
	Note that the matrix  $[g]_\BC$ is a block upper triangular matrix with $ (d_1+\cdots +d_s) $-many diagonal blocks.
	The matrix  $[g]_\BC$ is of the following form:
	\begin{align}\label{matrix-g-in-basis}
		[g]_\BC\,=\,
		\begin{pmatrix}
			g_{1\,1} &  \cdots &  \ \  \cdots  &g_{1\,n}\\
			&\ddots &   &\vdots \\
			&&   g_{r\,r} \  \cdots & g_{r\,n}\\     
			& &\qquad  \ddots   &\vdots \\
			& && g_{n\,n}
		\end{pmatrix}\,,
	\end{align}  
	where the order of  the first $s$-many diagonal blocks $g_{jj}$ is
	$ t_{d_j} \times t_{d_j} $,  for $j=1,\ldots , s$. 
	The condition $ g\overline{g} =-{\rm I}_n$ implies that $ g_{jj}$, the diagonal block of $[g]_\BC$,  satisfies 
	$$
	g_{jj}\overline{ g}_{jj}\,=\,-{\rm I}_{t_{d_j}} \,,\quad  j=1,\dots,s \,.
	$$
	By considering the determinant in the above relation, it follows that $ \det (	g_{jj}) \overline{\det( g_{jj})} \,=\,|\det( g_{jj}) |^2\,=\, (-1)^{{t_{d_j}}} $. 
	Thus
	$ {t_{d_j}}$, the order of $ g_{jj} $, is even for all $ j=1, \dots , s $. This completes the proof.
\end{proof}

The following lemma is an infinitesimal version of {\cite[Lemma 2.2.1]{ST}}.
\begin{lemma}\label{lem-real-lie-algebra}
	Let $\mathfrak{g} $ be a Lie algebra of a connected Lie group $G$. Let $X \in \mathfrak{g} $ and $ X = X_s + X_n $ be the Jordan decomposition of $X$.  Then $X$ is ${\rm Ad}_G$-real  if and only if both the following conditions hold:
	\begin{enumerate}
		\item  $X_s$ is ${\rm Ad}_G$-real.
		\item  $- X_n$ and $ \sigma X_n \sigma^{-1} $ are conjugate in $ \mathrm{Z}_G(X_s)$ 
		for some $\sigma \in G $ such that $ \sigma X_s \sigma^{-1} = - X_s $.
	\end{enumerate}
\end{lemma}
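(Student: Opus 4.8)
The plan is to reduce reality of $X$ to reality of its semisimple and nilpotent parts, exploiting uniqueness of the Jordan decomposition. First I would fix terminology: write $X = X_s + X_n$ with $[X_s, X_n] = 0$, and recall that for $g \in G$ the decomposition of $gXg^{-1}$ is $gX_sg^{-1} + gX_ng^{-1}$, and that $-X = (-X_s) + (-X_n)$ is the Jordan decomposition of $-X$ (since $-X_s$ is semisimple, $-X_n$ is nilpotent, and they commute). Uniqueness of the Jordan decomposition is the single tool that drives the whole argument.

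For the forward direction, suppose $gXg^{-1} = -X$ for some $g \in G$. Comparing semisimple and nilpotent parts of the two sides via uniqueness of the Jordan decomposition gives $gX_sg^{-1} = -X_s$ and $gX_ng^{-1} = -X_n$ simultaneously. The first equality says $X_s$ is ${\rm Ad}_G$-real, giving (1); and it exhibits an element $\sigma := g$ with $\sigma X_s \sigma^{-1} = -X_s$. For that same $\sigma$ we have $\sigma X_n \sigma^{-1} = -X_n$, so trivially $-X_n$ and $\sigma X_n \sigma^{-1}$ are equal, hence conjugate, in $\mathrm{Z}_G(X_s)$ — here one notes $-X_n$ lies in the image and the conjugating element can be taken to be the identity. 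This gives (2) with $\sigma = g$.

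For the converse, assume (1) and (2). By (1) there is some $\sigma_0 \in G$ with $\sigma_0 X_s \sigma_0^{-1} = -X_s$; the content of (2) is that we may pick $\sigma \in G$ with $\sigma X_s \sigma^{-1} = -X_s$ together with an element $h \in \mathrm{Z}_G(X_s)$ such that $h (-X_n) h^{-1} = \sigma X_n \sigma^{-1}$. Then set $g := h^{-1}\sigma$. Since $h$ centralizes $X_s$, we get $g X_s g^{-1} = h^{-1}(\sigma X_s \sigma^{-1})h = h^{-1}(-X_s)h = -X_s$, and $g X_n g^{-1} = h^{-1}(\sigma X_n \sigma^{-1})h = h^{-1}\big(h(-X_n)h^{-1}\big)h = -X_n$. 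Adding, $gXg^{-1} = -X_s - X_n = -X$, so $X$ is ${\rm Ad}_G$-real.

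The only genuinely delicate point is the bookkeeping in condition (2): one must be careful that the single element $\sigma$ appearing there is required both to reverse $X_s$ \emph{and} to be the element with respect to which $\sigma X_n \sigma^{-1}$ is compared inside the centralizer $\mathrm{Z}_G(X_s)$ — a weaker statement quantifying $\sigma$ and the conjugator independently would not suffice. Connectedness of $G$ is not actually needed for this particular equivalence (it enters the companion statements about strong reality, where one wants $\exp$ to hit involutions), but I would keep the hypothesis as stated for consistency with \cite[Lemma 2.2.1]{ST}. I expect no further obstacle; the argument is entirely formal once uniqueness of the Jordan decomposition and the commutation $[X_s,X_n]=0$ are in hand.
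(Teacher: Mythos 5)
Your proof is correct and follows essentially the same route as the paper: uniqueness of the Jordan decomposition for the forward direction, and composing $\sigma$ with the centralizer element for the converse (your $g = h^{-1}\sigma$ versus the paper's $h\sigma$ is an immaterial relabeling of $h$ by $h^{-1}$ within the group $\mathrm{Z}_G(X_s)$). Your remarks on the quantifier placement of $\sigma$ and on connectedness being inessential here are accurate side observations, not gaps.
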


\begin{proof} First assume that $X$ is ${\rm Ad}_G$-real. Let $g \in G$ be such that $ - X_s - X_n = - X = g X g^{-1}  = g (X_s + X_n ) g^{-1}  = g X_s g^{-1} + g X_n g^{-1} $.   From the uniqueness of the Jordan decomposition of $X,$ we get $ - X_s = g X_s g^{-1} \ \hbox{and} \ - X_n = g X_n g^{-1}. $
	
	Conversely,  let $  - X_s =  \sigma X_s \sigma^{-1} $ and $  - X_n =  h \sigma X_n \sigma^{-1} h^{-1} $ for some $\sigma \in G $ and $ h \in  \mathrm{Z}_G(X_s) $. Then it follows that 
	$  - X= (h\sigma) (X_s + X_n) (h\sigma)^{-1} = (h\sigma) X (h\sigma)^{-1}. $  Hence, $X$ is ${\rm Ad}_G$-real.    
\end{proof}  

In the following lemma, we get the centralizer of semi-simple elements in $\mathfrak{sl}(n,\C)$.

\begin{lemma}\label{lem-center-SL(n,C)}
	Let $ A  \in \mathfrak{sl}(n,\C) $ be a non-zero semi-simple element. For $ 1 \leq i \leq r $, let $ \lambda_i$ denote the distinct eigenvalues of $ A$ with the multiplicity  $ p_i$. Then 
	$$ \mathrm{Z}_{\mathrm{GL}(n,\C)}(A) := \bigg\{ g \in \mathrm{GL}(n,\C) \,\mid\, g Ag^{-1}  = A \bigg\}\, \simeq \, \bigoplus_{i=1}^{r} \mathrm{GL}(p_i,\C) \,.$$ 
\end{lemma}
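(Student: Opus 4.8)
The plan is to reduce to the diagonal case and then simply read off the block structure of a commuting matrix. First I would invoke \thmref{lem-Jordan-sl(n,C)} (or just the elementary fact that a semi-simple complex matrix is diagonalizable) to produce $h \in \mathrm{GL}(n,\C)$ with
$$ hAh^{-1} \,=\, D \,:=\, \lambda_1 \mathrm{I}_{p_1} \oplus \cdots \oplus \lambda_r \mathrm{I}_{p_r}\,. $$
Conjugation by $h$, i.e. the map $g \mapsto hgh^{-1}$, is a group automorphism of $\mathrm{GL}(n,\C)$ that carries $\mathrm{Z}_{\mathrm{GL}(n,\C)}(A)$ isomorphically onto $\mathrm{Z}_{\mathrm{GL}(n,\C)}(D)$, so it is enough to identify the latter subgroup.

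Next I would write an arbitrary $g \in \mathrm{GL}(n,\C)$ in block form $g = (g_{ij})_{1\le i,j\le r}$ compatible with the decomposition $n = p_1 + \cdots + p_r$, where $g_{ij}$ is a $p_i \times p_j$ matrix. The equation $gD = Dg$ then reads $\lambda_j g_{ij} = \lambda_i g_{ij}$ for all $i,j$, i.e. $(\lambda_i - \lambda_j)g_{ij} = 0$. Since the $\lambda_i$ are pairwise distinct, this forces $g_{ij} = 0$ whenever $i \ne j$, so $g = g_{11} \oplus \cdots \oplus g_{rr}$ is block diagonal; conversely every such block-diagonal matrix commutes with $D$. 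Finally, a block-diagonal matrix lies in $\mathrm{GL}(n,\C)$ precisely when each diagonal block $g_{ii}$ lies in $\mathrm{GL}(p_i,\C)$, so $\mathrm{Z}_{\mathrm{GL}(n,\C)}(D) = \bigoplus_{i=1}^{r}\mathrm{GL}(p_i,\C)$. Composing with the isomorphism from the first step yields the asserted isomorphism.

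The argument is entirely routine and I do not expect any real obstacle; the only point worth stating carefully is that it is the \emph{distinctness} of the eigenvalues $\lambda_i$ (rather than semi-simplicity alone) that annihilates the off-diagonal blocks, and that neither the hypothesis $A \ne 0$ nor the trace condition coming from $A \in \mathfrak{sl}(n,\C)$ is used beyond excluding the trivial case.
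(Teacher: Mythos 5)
Your proof is correct and follows the same route as the paper: conjugate $A$ to the block form $\oplus_{i=1}^{r}\lambda_i\mathrm{I}_{p_i}$ and then read off the centralizer by a direct block computation, which the paper simply labels ``straightforward calculations.'' Your write-up just makes those calculations explicit, including the correct observation that it is the distinctness of the $\lambda_i$ that kills the off-diagonal blocks.
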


\begin{proof} Since $A$ is a semi-simple element in $\s\l(n,\C)$, up to conjugacy  $ A$ is of the form $\oplus_{ i=1}^{r}  \lambda_i \mathrm{I}_{p_i} $. The proof now follows from straightforward calculations.
\end{proof}

The following lemma  characterizes conjugating   elements corresponding to   $ {\rm Ad} _{{\SL}(n,\C)}$-real semi-simple elements of $\mathfrak{sl}(n,\C)$. Such description has been used in proving  Theorem \ref{thm-real-sl(n,C)}.
\begin{lemma}\label{lem-conj-semi-sl(n,C)} 
	Let   $X_s \in \mathfrak{sl}(n,\C)$ be an element  such that 
	$$
	X_s \,=\, \mathrm{O}_{p_o \times p_o} \, \bigoplus\, \big( \oplus_{ i=1}^{r} ( \lambda_i  \mathrm{I}_{p_i} ) \big)\, \bigoplus\, \big( \oplus_{ i=1}^{r}  (-\lambda_i  \mathrm{I}_{p_i} )\big),
	$$
	where $ p_o \in \N \cup \{0\}$,  $ p_i \in \N $ and $\lambda_i \neq 0 $ for all $1 \le i \le r $  such that $ \lambda_j \neq \pm \lambda_k \ \hbox{for all} \  j \neq k$.  Suppose that $ \sigma X_s \sigma^{-1} = - X_s $ for some  $ \sigma \in \mathrm{SL}(n,\C)$.  Then  $\sigma $ has the following form: 
	\begin{align}\label{def-sigma-elt}
		\sigma =  \alpha_{p_o \times p_o} \bigoplus  \begin{pmatrix}
			&  (\oplus_{ i=1}^{r}  f_{p_i}) \\
			(\oplus_{ i=1}^{r}  g_{p_i}) &  
		\end{pmatrix}, 
	\end{align}
	where  $ \sum_{i=1}^{r} p_i = p$ and $ f_{p_i} ,  g_{p_i}   \in  \mathrm{GL}(p_i,\C),  \alpha_{p_o \times p_o} \in \mathrm{GL}(p_o,\C)$.
\end{lemma}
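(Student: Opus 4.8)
The plan is to exploit the block structure of $X_s$ together with the fact that conjugation by $\sigma$ negates $X_s$. First I would observe that $X_s$ is semisimple with eigenvalues $0$ (multiplicity $p_o$), each $\lambda_i$ (multiplicity $p_i$), and each $-\lambda_i$ (multiplicity $p_i$), and that these are pairwise distinct by the hypothesis $\lambda_j \neq \pm\lambda_k$ for $j \neq k$ and $\lambda_i \neq 0$. Decompose $\C^n = V_0 \oplus \bigoplus_i V_{\lambda_i} \oplus \bigoplus_i V_{-\lambda_i}$ into the corresponding eigenspaces, where $V_0$ has dimension $p_o$ and each $V_{\pm\lambda_i}$ has dimension $p_i$. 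The relation $\sigma X_s \sigma^{-1} = -X_s$ says precisely that $\sigma$ intertwines $X_s$ with $-X_s$, hence $\sigma$ carries the $\mu$-eigenspace of $X_s$ onto the $\mu$-eigenspace of $-X_s$, i.e. onto the $(-\mu)$-eigenspace of $X_s$, for every eigenvalue $\mu$.

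Next I would translate this eigenspace-permutation statement into the matrix form claimed. Since $\sigma(V_\mu) = V_{-\mu}$, we get $\sigma(V_0) = V_0$, $\sigma(V_{\lambda_i}) = V_{-\lambda_i}$, and $\sigma(V_{-\lambda_i}) = V_{\lambda_i}$ for each $i$. Writing $\sigma$ as a block matrix with respect to the ordered decomposition $V_0 \,\vee\, \bigoplus_i V_{\lambda_i} \,\vee\, \bigoplus_i V_{-\lambda_i}$, the condition $\sigma(V_0) = V_0$ forces the top-left block to be some $\alpha_{p_o \times p_o}$ with no coupling between $V_0$ and the rest; the condition $\sigma(V_{\lambda_i}) = V_{-\lambda_i}$ forces the only nonzero blocks among the $\lambda$-part rows to sit in the $(-\lambda_i)$-columns, giving maps $g_{p_i} \colon V_{\lambda_i} \to V_{-\lambda_i}$; and likewise $\sigma(V_{-\lambda_i}) = V_{\lambda_i}$ gives $f_{p_i} \colon V_{-\lambda_i} \to V_{\lambda_i}$. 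Since $\sigma \in \mathrm{SL}(n,\C)$ is invertible and block-triangular with respect to the splitting $V_0 \oplus \big(\bigoplus_i V_{\lambda_i} \oplus V_{-\lambda_i}\big)$, each of $\alpha_{p_o\times p_o}$, and each of the $2p_i \times 2p_i$ anti-diagonal blocks $\begin{pmatrix} 0 & f_{p_i} \\ g_{p_i} & 0\end{pmatrix}$, must be invertible; invertibility of the anti-diagonal block is equivalent to $f_{p_i}, g_{p_i} \in \mathrm{GL}(p_i,\C)$. Reassembling in the order prescribed by \eqref{def-sigma-elt} yields exactly the stated shape.

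There is essentially one bookkeeping point to be careful about: the ordering of basis vectors in \eqref{def-sigma-elt} groups \emph{all} the $V_{\lambda_i}$ together and \emph{all} the $V_{-\lambda_i}$ together, so the off-diagonal blocks are $\oplus_{i=1}^r f_{p_i}$ and $\oplus_{i=1}^r g_{p_i}$ rather than a direct sum of $2\times 2$-type blocks; I would note that this is merely a permutation of the basis and does not affect invertibility or the form of the entries. I do not expect a genuine obstacle here — the only mild subtlety is making sure that no ``cross terms'' between $V_{\lambda_i}$ and $V_{\lambda_j}$ (or $V_{-\lambda_i}$ and $V_{-\lambda_j}$) can appear; these are ruled out because $\sigma$ must send each $X_s$-eigenspace onto a single $X_s$-eigenspace, and the eigenvalues $\lambda_1, \dots, \lambda_r, -\lambda_1, \dots, -\lambda_r$ are pairwise distinct, so $V_{\lambda_i}$ maps into $V_{-\lambda_i}$ alone. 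With that remark in place the proof reduces to straightforward linear algebra.
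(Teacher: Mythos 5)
Your proof is correct and is essentially the same argument as the paper's: both hinge on the pairwise distinctness of the eigenvalues $0,\pm\lambda_1,\dots,\pm\lambda_r$ to force $\sigma$ to permute the eigenspaces of $X_s$ according to $\mu\mapsto-\mu$, which is exactly what the paper's explicit block equations ($MA=0$, $QA=-AQ$, $AR=RA$, together with Lemma~\ref{lem-center-SL(n,C)}) encode. Your eigenspace phrasing just packages that computation more conceptually; the only blemish is a harmless rows/columns slip in describing where the nonzero blocks of $\sigma(V_{\lambda_i})=V_{-\lambda_i}$ sit, which does not affect the conclusion.
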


\begin{proof}
	Let $ A :=  \oplus_{ i=1}^{r} ( \lambda_i  \mathrm{I}_{p_i} ) \in  \mathrm{GL}(p,\C). $ Then  $ X_s = \mathrm{O}_{p_o \times p_o} \oplus A \oplus (- A). $
	Let  $$\sigma :=  \begin{pmatrix}
		L_{p _o \times p_o}   &  M_{p_o \times p} &  N_{p_o \times p}    \\
		P_{ p \times p_o} &  Q_{p \times p}  &  R_{p \times p}\\ 
		S_{p \times p_o} &  T_{p \times p}  &  U_{p \times p}\\ 
	\end{pmatrix}  \in \mathrm{SL}(n,\C)$$
	be  such that $ \sigma X_s \sigma^{-1} = - X_s $. 
	Then the following conditions hold on sub-matrices of $ \sigma $:
	\begin{enumerate}
		\item  $ MA = NA =   \mathrm{O}_{p _o \times p} $ and $ AP = AS = \mathrm{O}_{p \times p_o}.$ Since $ A \in \mathrm{GL}(p,\C)$,  it follows that $ M = N = \mathrm{O}_{p _o \times p} \ \hbox{and} \,\, P = S = \mathrm{O}_{p \times p_o}. $
		
		\item $ QA = - AQ \ \hbox{and}  \ AU = - UA $, where $ A =  \oplus_{ i=1}^{r} ( \lambda_i  \mathrm{I}_{p_i} )$,   $\lambda_i \neq 0 $ and  $ \lambda_j \neq \pm \lambda_k \ \hbox{for all} \  j \neq k.$ Therefore, $ Q = U =  \mathrm{O}_{p \times p}.$
		
		\item $ AR = RA \ \hbox{and}  \ AT = TA $, where  $ A =  \oplus_{ i=1}^{r} ( \lambda_i  \mathrm{I}_{p_i} )$,   $\lambda_i \neq 0 $ and $ \lambda_j \neq \pm \lambda_k \ \hbox{for all} \  j \neq k.$ By using Lemma \ref{lem-center-SL(n,C)}, we get   $ R, T  \in \mathrm{Z}_{\mathrm{GL}(n,\C)}(A) $ such that $ R = \oplus_{ i=1}^{r}  f_{p_i},  T = \oplus_{ i=1}^{r}  g_{p_i} $,  where $ f_{p_i} ,  g_{p_i}   \in  \mathrm{GL}(p_i,\C)$ for all $1 \leq i \leq r$.  
	\end{enumerate} 
	The proof of lemma now follows from the above observations.  
\end{proof}

Next lemma gives us a relationship between $ {\rm Ad}_{{\rm SL}(n,\H)} $-reality in $\mathfrak{sl}(n,\H) $ and $ {\rm Ad}_{{\rm SL}(2n,\C)} $-reality in $ \mathfrak{sl}(2n,\C) $.
\begin{lemma}\label{lem-real-H- to-real-C}
	If $X \in \mathfrak{sl}(n,\H) $ is $ {\rm Ad}_{{\rm SL}(n,\H)} $-real (resp.  strongly $ {\rm Ad}_{{\rm SL}(n,\H)} $-real),  then $\Phi(X) \in  \mathfrak{sl}(2n,\C)$ is $ {\rm Ad}_{{\rm SL}(2n,\C)} $-real (resp.  strongly   $ {\rm Ad}_{{\rm SL}(2n,\C)} $-real).
\end{lemma}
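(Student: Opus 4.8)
The plan is to exploit the ring homomorphism properties of the embedding $\Phi$ recorded in \lemref{lem-prop-adj}, which say precisely that $\Phi$ is multiplicative, additive, and sends inverses to inverses, together with the fact (also in that lemma) that $\Phi$ restricts to a group homomorphism $\mathrm{SL}(n,\H)\to\mathrm{SL}(2n,\C)$. First I would observe that $\Phi$ is a Lie algebra embedding: since $\Phi$ is additive and multiplicative, it sends $XY-YX$ to $\Phi(X)\Phi(Y)-\Phi(Y)\Phi(X)$, and it sends $\mathfrak{sl}(n,\H)$ into $\mathfrak{sl}(2n,\C)$ because $\mathrm{tr}_\H$ was \emph{defined} as $\mathrm{tr}\circ\Phi$. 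In particular $\Phi(-X)=-\Phi(X)$ by additivity.

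Next, suppose $X\in\mathfrak{sl}(n,\H)$ is $\mathrm{Ad}_{\mathrm{SL}(n,\H)}$-real, so that $-X=gXg^{-1}$ for some $g\in\mathrm{SL}(n,\H)$. Applying $\Phi$ to both sides and using multiplicativity and the inverse-compatibility from \lemref{lem-prop-adj}(1), I get
$$
-\Phi(X)=\Phi(-X)=\Phi(gXg^{-1})=\Phi(g)\Phi(X)\Phi(g)^{-1}.
$$
By \lemref{lem-prop-adj}(3), $\Phi(g)\in\mathrm{SL}(2n,\C)$, so $\Phi(X)$ is $\mathrm{Ad}_{\mathrm{SL}(2n,\C)}$-real. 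This handles the first assertion.

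For the strong version, assume $-X=\tau X\tau^{-1}$ with $\tau\in\mathrm{SL}(n,\H)$ an involution, i.e. $\tau^2=\mathrm{I}_n$. Then $\Phi(\tau)\in\mathrm{SL}(2n,\C)$ by \lemref{lem-prop-adj}(3), and $\Phi(\tau)^2=\Phi(\tau^2)=\Phi(\mathrm{I}_n)=\mathrm{I}_{2n}$ by multiplicativity, so $\Phi(\tau)$ is an involution in $\mathrm{SL}(2n,\C)$; and as above $-\Phi(X)=\Phi(\tau)\Phi(X)\Phi(\tau)^{-1}$. Hence $\Phi(X)$ is strongly $\mathrm{Ad}_{\mathrm{SL}(2n,\C)}$-real. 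There is no real obstacle here — the only points needing a word of care are the well-definedness of $\mathrm{tr}_\H$ and $\det_\H$ (so that $\Phi$ genuinely lands in $\mathfrak{sl}(2n,\C)$ and $\mathrm{SL}(2n,\C)$, which is exactly \lemref{lem-prop-adj}) and the observation that the identity element is preserved, which is immediate from $\Phi(A^{-1})=\Phi(A)^{-1}$ applied to $A=\mathrm{I}_n$ or directly from the block formula \eqref{definition-Phi-embedding}. So the proof is essentially a two-line diagram chase through $\Phi$.
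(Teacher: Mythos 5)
Your argument is correct and is exactly the paper's proof, which simply cites the fact that $\Phi$ is a homomorphism (Lemma \ref{lem-prop-adj}); you have just written out the diagram chase in full, including the preservation of involutions for the strong case.
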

\begin{proof}
	The proof follows from the fact that the map $ \Phi $ in \eqref{definition-Phi-embedding} is a homomorphism.	
\end{proof}

The converse of \lemref{lem-real-H- to-real-C} is not true; see Example \ref{ex1}.

\section{Proof of the main results}\label{sec-pf-thm}

\subsection{Classification of real and  strongly real elements in  $\mathfrak{sl}(n,\C) $}

Proof of \thmref{thm-real} is divided into two parts according to $\F=\C$ or $\H$. 
Here we will prove \thmref{thm-real} for the case $ \F=\C$.	The case $ \F=\H $ will be dealt in \secref{sec-sl-n-H}.

\begin{theorem}\label{thm-real-sl(n,C)}
	Let $X \in \mathfrak{sl}(n,\C)$ be a non-zero arbitrary element. Then $X$ is $ {\rm Ad}_{{\rm SL }(n,\C)} $-real  if and only if both the following conditions hold:
	\begin{enumerate}
		\item \label{cond-1-real-sl(n,C)-thr}  
		If $\lambda$ is an eigenvalue of $X$, then $ - \lambda $ is also an eigenvalue of $X$ with the {\it same} multiplicity.
		\item \label{cond-2-real-sl(n,C)-thr}  
		Let $ m $ be the multiplicity of  the eigenvalues $ \lambda $ and $ -\lambda $.
		Let $ \d_\lambda $  and $ \d_{-\lambda} $ be  the partitions  of $ m $ associated to the eigenvalues $ \lambda $ and $ -\lambda $, respectively. Then 
		$$ \d_\lambda\,=\, \d_{-\lambda}  \,.$$
	\end{enumerate}
\end{theorem}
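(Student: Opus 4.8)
The plan is to reduce the classification to the already-known nilpotent case via the infinitesimal Jordan-decomposition criterion of \lemref{lem-real-lie-algebra}. Write $X = X_s + X_n$ for the Jordan decomposition. By \lemref{equi-SL-GL} it suffices to work with $\mathrm{Ad}_{\mathrm{GL}(n,\C)}$-reality, which is more flexible. By \thmref{lem-Jordan-sl(n,C)} we may assume $X$ is in Jordan form, so $X_s = \oplus_i (\lambda_i \mathrm{I}_{m_{\lambda_i}})$ over the distinct eigenvalues $\lambda_i$, and for each eigenvalue $\lambda$ there is an associated partition $\d_\lambda$ of $m_\lambda$ describing the sizes of the Jordan blocks with that eigenvalue; the nilpotent part $X_n$ restricted to the generalized eigenspace $V_\lambda$ is $\mathrm{N}(\d_\lambda,0)$.

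First I would prove the ``only if'' direction. If $gXg^{-1} = -X$ for some $g$, then by the uniqueness part of \lemref{lem-real-lie-algebra} we get $g X_s g^{-1} = -X_s$ and $g X_n g^{-1} = -X_n$. The first equation says $g$ carries the $\lambda$-eigenspace of $X_s$ isomorphically onto the $(-\lambda)$-eigenspace; hence $\lambda$ is an eigenvalue iff $-\lambda$ is, and they have equal multiplicity — this is condition \eqref{cond-1-real-sl(n,C)-thr}. Moreover $g|_{V_\lambda}\colon V_\lambda \to V_{-\lambda}$ conjugates $X_n|_{V_\lambda}$ to $-X_n|_{V_{-\lambda}}$; since $X_n|_{V_\lambda} = \mathrm{N}(\d_\lambda,0)$ and $-\mathrm{N}(\d,0)$ is conjugate to $\mathrm{N}(\d,0)$ (e.g. via a sign-alternating diagonal matrix, as in the proof of \thmref{lem-nil-strongly-real-sl(n,H)}), the nilpotent operators $\mathrm{N}(\d_\lambda,0)$ and $\mathrm{N}(\d_{-\lambda},0)$ are conjugate, which forces $\d_\lambda = \d_{-\lambda}$ by uniqueness of Jordan form (\thmref{lem-Jordan-sl(n,C)}). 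That is condition \eqref{cond-2-real-sl(n,C)-thr}.

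For the ``if'' direction I would construct $g$ explicitly. Group the nonzero eigenvalues into pairs $\{\lambda, -\lambda\}$; on $V_\lambda \oplus V_{-\lambda}$, both summands being $\mathrm{N}(\d_\lambda, 0)$ by hypothesis, put $X_s$ and $X_n$ in the canonical form $\lambda\mathrm{I} \oplus (-\lambda)\mathrm{I}$ resp. $\mathrm{N}(\d_\lambda,0)\oplus \mathrm{N}(\d_\lambda,0)$, and define $g$ on this pair by the $2\times 2$ block matrix $\begin{pmatrix} 0 & P \\ P & 0\end{pmatrix}$ where $P$ is the sign-alternating diagonal involution (within each Jordan block) satisfying $P\,\mathrm{N}(\d_\lambda,0)\,P^{-1} = -\mathrm{N}(\d_\lambda,0)$; one checks $g(\lambda\mathrm{I}\oplus(-\lambda)\mathrm{I})g^{-1} = (-\lambda)\mathrm{I}\oplus\lambda\mathrm{I}$ and $g(\mathrm{N}\oplus\mathrm{N})g^{-1} = -(\mathrm{N}\oplus\mathrm{N})$ simultaneously, so $gXg^{-1} = -X$ on that block. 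On the generalized $0$-eigenspace $V_0$ (if present), $X|_{V_0}$ is nilpotent, and $-\mathrm{N}(\d_0,0)$ is conjugate to $\mathrm{N}(\d_0,0)$ in $\mathrm{GL}$, so $g|_{V_0}$ can be chosen accordingly. Assembling these blocks gives $g \in \mathrm{GL}(n,\C)$ with $gXg^{-1} = -X$, and by \lemref{equi-SL-GL} we may rescale to land in $\mathrm{SL}(n,\C)$. The main obstacle is purely bookkeeping: one must verify that the single matrix $g$ simultaneously reverses the semisimple and nilpotent parts on each eigenvalue-pair block, which is why choosing $P$ to be an \emph{involution} commuting with the pairing (rather than an arbitrary reverser) matters; everything else is routine linear algebra.
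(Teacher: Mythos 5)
Your proposal is correct and follows essentially the same route as the paper: split $X=X_s+X_n$, extract $gX_sg^{-1}=-X_s$ and $gX_ng^{-1}=-X_n$ from uniqueness of the Jordan decomposition, deduce the pairing of eigenvalues and the equality $\d_\lambda=\d_{-\lambda}$ from uniqueness of the Jordan form, and build the reverser as a swap of paired eigenspaces composed with a sign-alternating diagonal involution, finishing with \lemref{equi-SL-GL} for the determinant. The only cosmetic difference is that you argue directly on the generalized eigenspaces where the paper routes the ``only if'' direction through the explicit block form of $\sigma$ in \lemref{lem-conj-semi-sl(n,C)} and the centralizer description of \lemref{lem-center-SL(n,C)}.
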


\begin{proof}	
	First we assume that  $X\in \s\l(n,\C)$ satisfies conditions  \eqref{cond-1-real-sl(n,C)-thr}  and  (\ref{cond-2-real-sl(n,C)-thr}) of \thmref{thm-real-sl(n,C)}.  	
	Then, the Jordan decomposition of $X$  will be of the following form: 
	\begin{align}\label{Jordan-decomp-real-elt}
		X = X_s + X_n 
	\end{align} 
	such that  
	\begin{itemize}
		\item[(\textit{i})]
		$  X_s = \mathrm{O}_{p_o \times p_o} \bigoplus \Big(\oplus_{ i=1}^{r} (\lambda_i  \mathrm{I}_{p_i} )\Big) \bigoplus \   \Big(\oplus_{ i=1}^{r}  (- \lambda_i  \mathrm{I}_{p_i})\Big) $, \vspace{.2cm}
		\item [(\textit{ii})] 
		$X_n = \mathrm{N}(\d_0, 0) \bigoplus \Big( \oplus_{ i=1}^{r}  \mathrm{N}(\d_{\lambda_i},  \lambda_i)\Big) \bigoplus \Big( \oplus_{ i=1}^{r}  \mathrm{N}( \d_{{-\lambda_i}}, - \lambda_i)\Big),$
		where $p= \sum_{i=1}^{r} p_i ,$  $   p_o +2p = n $ and $\lambda_i \neq 0$ for all $ i $,  $ \lambda_j \neq \pm \lambda_k \ \hbox{for all} \  j \neq k.$ 
	\end{itemize}
	
	Let  $$\sigma := \mathrm{I}_{p_o} \bigoplus  \begin{pmatrix}
		&  - \mathrm{I}_{p}\\
		\mathrm{I}_{p} &   \\ 
	\end{pmatrix}\,  \,\,{\rm and }\quad \tau := \textnormal{diag} (z,  -z,  z,  -z, \dots , (-1)^{n+1}z \ )_{n \times n }, $$ 
	where $z \in \C \setminus \{0\} $ such that $ \tau \in \mathrm{SL}(n,\C).$ Note that $ \sigma \in \mathrm{SL}(n,\C) $ such that $ \sigma X_s \sigma^{-1}$  $ = - X_s $ and $ \sigma X_n \sigma^{-1} = X_n.$ Similarly,
	$ \tau X_s \tau^{-1} = X_s $ and $ \tau X_n \tau^{-1} = - X_n $.
	Set $$  g: = \tau \sigma \,.$$
	Then $gXg^{-1} = (\tau \sigma)( X_s + X_n )(\tau \sigma )^{-1} = -X$, and $ \det g=1 $.
	Hence, $X$ is $ {\rm Ad}_{{\rm SL }(n,\C)} $-real.
	
	Next, we assume that $ X $ is $ {\rm Ad}_{{\rm SL }(n,\C)} $-real. 
	Then $X_s$ is so; see Lemma \ref{lem-real-lie-algebra} (1).
	Now condition (\ref{cond-1-real-sl(n,C)-thr}) of  the Theorem \ref{thm-real-sl(n,C)} follows.
	
	Moreover, using Lemma \ref{lem-real-lie-algebra}(2), we have	$- X_n$ and $ \sigma X_n \sigma^{-1} $ are conjugate in $ \mathrm{Z}_{\mathrm{SL}(n,\C)}(X_s)$ 
	for some $\sigma \in \mathrm{SL}(n,\C) $ such that $ \sigma X_s \sigma^{-1} = - X_s $.
	Then $\sigma$ is of the form \eqref{def-sigma-elt}. 
	In view of 	
	Lemma \ref{lem-center-SL(n,C)},  it follows  that there exists $ h =  \oplus_{ j=0}^{2r}  h_{p_j} \in \mathrm{Z}_{\mathrm{SL}(n,\C)}(X_s) $ such that  $  hX_n h^{-1} = - \sigma X_n \sigma^{-1}$,   where $ h_{p_j} \in  \mathrm{GL}(p_j,\C) $ for all $ j \in \{ 0, 1,  2,  \dots , 2r \}.$
	Now the relation $  hX_n h^{-1} = - \sigma X_n \sigma^{-1}$ implies that  $h_{p_i}  \mathrm{N} (\d_{\lambda_i}, \lambda_i) h_{p_i}^{-1} = -  f_{p_i}  \mathrm{N}(\d_{-\lambda_i}, - \lambda_i) f_{p_i}^{-1} $ for  all $ i \in \{ 1,   \dots ,  r \}$,  where $ f_{p_i} $'s are as in \eqref{def-sigma-elt}. Therefore, the conclusion follows from the Jordan canonical form over $\C$; see \thmref{lem-Jordan-sl(n,C)}. 
\end{proof}

Next, we will prove \thmref{thm-strongly-real-sl(n,C)}.  Recall the definition of $ \widetilde{ \mathcal{P}_e}(n)$ as in (\ref{definition-P-e-tilde}).

\noindent {\bf Proof of \thmref{thm-strongly-real-sl(n,C)}.~}
First we assume that either $0$ is an eigenvalue of $X$ such that $ {\d}(p_o) \not\in  \widetilde{ \mathcal{P}_e}(p_o)$ or $n \not\equiv 2 \  (mod \ 4 \ )$.
We will divide the above situation into two cases. 

{\it Case 1.~}
When  	$n \not\equiv 2 \  (mod \ 4 \ )$ and $p_o = 0$, i.e., $n= 2p$, where $p$ is even.
Define 
$$\sigma := \begin{pmatrix}
	&   \mathrm{I}_{p}\\
	\mathrm{I}_{p} &    \\ 
\end{pmatrix}  \text{ and } \tau :=  \tau_1 \oplus \tau_1 , \, \hbox{where } \, \tau_1:=\textnormal{diag} (1,  -1,  1,  -1, \dots , (-1)^{p+1} \ )_{p \times p }.$$
Since $p$ is even, $\det(\sigma)= (-1)^p=1$.
Further, note that $ \tau\sigma = \sigma \tau$.

{\it Case 2.~}	When  $ p_o \neq 0$ and $ {\d}(p_o) \not\in  \widetilde{ \mathcal{P}_e}(p_o)$, set  $\sigma := \pm \mathrm{I}_{ 1} \oplus \mathrm{I}_{p_o - 1} \bigoplus  \begin{pmatrix}
	&  \mathrm{I}_{p}\\
	\mathrm{I}_{p} &    \\ 
\end{pmatrix}$,  where $+ \mathrm{I}_{ 1} $ or  $ -\mathrm{I}_{ 1} $ is chosen so that $ \sigma \in \mathrm{SL}(n,\C) $.
Since $ {\d}(p_o) \not\in  \widetilde{ \mathcal{P}_e}(p_o)$, using  \thmref{lem-nil-strongly-real-sl(n,C)}, we get an {\it involution} $\tau_o \in \mathrm{SL}(p_o,\C) $ such that $$ \tau_o \  \mathrm{N}(\d(p_o), 0) \  \tau_o^{-1} = -  \mathrm{N}(\d(p_o), 0). $$
Further, we can choose $\tau_o$ such that $\tau_o$ is diagonal; see {\cite[Theorem 4.6, Proposition 4.4]{GM}}.
Let  $ \tau := \tau_o  \oplus  \tau_1 \oplus  \tau_1 $ such that $ \tau_1 =  \textnormal{diag} (1,  -1,  1,  -1, \dots , (-1)^{p+1} \ )_{p \times p }$.  Note that by our choice of $\tau_o$, it follows that $ \tau  \sigma = \sigma \tau$.

Now in both the above cases,  we have observed that $ \sigma, \tau\in \mathrm{SL}(n,\C) $ are involutions such that $ \sigma X_s \sigma^{-1} = - X_s $, $ \sigma X_n \sigma^{-1} = X_n$, $ \tau X_s \tau^{-1} = X_s $, and $ \tau X_n \tau^{-1} = - X_n $. Since  $ \tau\sigma = \sigma \tau$, it follows that $ \tau \sigma \in {\SL}(n,\C)$ is an involution and  $ (\tau \sigma) X( \tau \sigma)^{-1} = -X$.  Hence, $X$ is strongly $ {\rm Ad}_{{\rm SL}(n,\C)} $-real. 

Now, we assume that  $X $ in  $\mathfrak{sl}(n,\C) $ is strongly  $ {\rm Ad}_{{\rm SL}(n,\C)} $-real.  This implies that $X_s$ and $X_n$ are strongly $ {\rm Ad}_{{\rm SL}(n,\C)} $-real.   
Using Proposition \ref{lem-semi-strongly-real-sl(n,C)}, we have either $0$ is an eigenvalue or $n \not\equiv 2 \  (mod \ 4 \ ).$  Therefore,  if $0$ is not an eigenvalue of $X$, then we are done.

Next, we assume that $0$ is an eigenvalue of $X$, i.e.,  $p_o \neq 0 \  \hbox{and}  \ p_o \in \N $.  Recall that $ {\d}(p_o) \in \mathcal{P}(p_o) $ be the corresponding partition to the eigenvalue $ 0 $.
Since $X_n \in \mathfrak{sl}(n,\C) $ is strongly real,  using  \thmref{lem-nil-strongly-real-sl(n,C)}  it follows  that  if $ {\d}(n) \in \mathcal{P}(n) $ be the corresponding partition of $X_n$, then $ {\d}(n) \not\in  \widetilde{ \mathcal{P}_e}(n).$ 	In view of  the Jordan decomposition of $X$  as given in \eqref{Jordan-decomp-real-elt},  it follows that  if $ {\d}(n) \not\in  \widetilde{ \mathcal{P}_e}(n)$ then  ${\d}(p_o) \not\in  \widetilde{ \mathcal{P}_e}(p_o)$. This completes the proof.
\qed

\subsection{Classification of  real and strongly real  elements in  $\mathfrak{sl}(n,\H) $}\label{sec-sl-n-H}

Recall that (right) eigenvalues of a matrix over $\H$ exist in similarity classes, and we can represent an eigenvalue class by their \textit{unique} complex representative; see Definition \ref{def-eigen-M(n,H)}.

The following lemma classifies  $ {\rm Ad}_{{\rm SL }(n,\H)} $-real semi-simple elements in $\mathfrak{sl}(n,\H) $.
\begin{lemma}\label{lem-real-semi-sl(n,H)}
	Let $X \in \mathfrak{sl}(n,\H) $ be a semi-simple element. Then the following statements are equivalent.
	\begin{enumerate}
		\item $X$ is $ {\rm Ad}_{{\rm SL }(n,\H)} $-real.
		\item \label{cond-real-semi-sl(n,H)} 
		If $\lambda$ is an eigenvalue of $X$ such that the real part of $\lambda$ is non-zero, then $-\lambda$ is also an eigenvalue of $X$.
	\end{enumerate} 
\end{lemma}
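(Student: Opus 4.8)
The plan is to prove the two implications separately, using the Jordan canonical form over $\H$ (\thmref{lem-Jordan-sl(n,C)}) together with the single algebraic fact that the scalar quaternion $\jb$ realizes complex conjugation on the copy $\C=\R\oplus\R\ib$ sitting inside $\H$, i.e. $\jb z\jb^{-1}=\bar z$ (equivalently $\jb z\jb=-\bar z$) for all $z\in\C$; note in particular $\jb\ib\jb^{-1}=-\ib$.

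For $(1)\Rightarrow(2)$, suppose $gXg^{-1}=-X$ with $g\in\mathrm{SL}(n,\H)$. If $v\in\H^n\setminus\{0\}$ satisfies $Xv=v\lambda$, then $(gXg^{-1})(gv)=g(v\lambda)=(gv)\lambda$, so $-X(gv)=(gv)\lambda$, i.e. $X(gv)=(gv)(-\lambda)$; hence $[-\lambda]$ is an eigenvalue class of $X$. Moreover $v\mapsto gv$ is an $\H$-linear automorphism of $\H^n$ carrying the right $\H$-submodule of eigenvectors for the class $[\lambda]$ isomorphically onto that for $[-\lambda]$, so the two classes have the same multiplicity. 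Finally, a class of zero real part satisfies $[\lambda]=[-\lambda]$ automatically because $\jb\ib\jb^{-1}=-\ib$; thus the content of (2), the assertion for classes of nonzero real part, is exactly what has just been shown.

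For $(2)\Rightarrow(1)$ I would first invoke \thmref{lem-Jordan-sl(n,C)} for the semi-simple element $X$: there is $h\in\mathrm{GL}(n,\H)$ with
\[
hXh^{-1}=\Big(\bigoplus_{j}\mu_j\,\mathrm{I}_{q_j}\Big)\ \oplus\ \Big(\bigoplus_{i}\big(\lambda_i\,\mathrm{I}_{p_i}\ \oplus\ (-\overline{\lambda_i})\,\mathrm{I}_{p_i}\big)\Big),
\]
where the $\mu_j$ are the purely imaginary complex representatives (possibly including $0$) of the eigenvalue classes of zero real part, and the $\lambda_i$ are the complex representatives of non-negative imaginary part of the classes of nonzero real part; by (2) these last classes come in pairs $[\lambda_i],[-\lambda_i]$ of a common multiplicity $p_i$, the class $[-\lambda_i]$ being represented (with non-negative imaginary part) by $-\overline{\lambda_i}$. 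Now set
\[
g:=\Big(\bigoplus_{j}\jb\,\mathrm{I}_{q_j}\Big)\ \oplus\ \Big(\bigoplus_{i}\begin{pmatrix}0&\jb\,\mathrm{I}_{p_i}\\ \jb\,\mathrm{I}_{p_i}&0\end{pmatrix}\Big).
\]
Using $\jb z\jb=-\bar z$ one checks blockwise that $g\,(hXh^{-1})\,g^{-1}=-hXh^{-1}$: on a $\mu_j$-block, $\jb\mu_j\jb^{-1}=\overline{\mu_j}=-\mu_j$ since $\mu_j$ is purely imaginary; on an $i$-block (writing $\mathrm{I}:=\mathrm{I}_{p_i}$),
\[
\begin{pmatrix}0&\jb\mathrm{I}\\ \jb\mathrm{I}&0\end{pmatrix}\begin{pmatrix}\lambda_i\mathrm{I}&0\\ 0&-\overline{\lambda_i}\mathrm{I}\end{pmatrix}\begin{pmatrix}0&\jb\mathrm{I}\\ \jb\mathrm{I}&0\end{pmatrix}^{-1}=\begin{pmatrix}-\lambda_i\mathrm{I}&0\\ 0&\overline{\lambda_i}\mathrm{I}\end{pmatrix},
\]
which is the matching block of $-hXh^{-1}$. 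Consequently $(h^{-1}gh)X(h^{-1}gh)^{-1}=-X$. Each diagonal block of $g$ has $\det_{\H}=1$ (apply $\Phi$ and use that $\det_{\H}=\det\circ\Phi$ is multiplicative, by \lemref{lem-prop-adj}), so $g$, and hence $h^{-1}gh$, lies in $\mathrm{SL}(n,\H)$; thus $X$ is $\mathrm{Ad}_{\mathrm{SL}(n,\H)}$-real. Alternatively, the same pairing shows that $X$ and $-X$ have identical Jordan data over $\H$, so \thmref{lem-Jordan-sl(n,C)} already produces a conjugating element in $\mathrm{GL}(n,\H)$, which \lemref{equi-SL-GL} upgrades to one in $\mathrm{SL}(n,\H)$.

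The only place requiring genuine care is the quaternionic similarity-class bookkeeping: that, with representatives chosen of non-negative imaginary part, the class $[-\lambda]$ is represented by $-\overline{\lambda}$ rather than by $-\lambda$; that classes of zero real part are fixed by $\lambda\mapsto-\lambda$; and that the scalar $\jb$ induces complex conjugation. Once these conventions are fixed, the blockwise check that $g$ reverses $X$ is a short computation, so I anticipate no real obstacle. (As in the statement of \thmref{thm-real}, condition (2) is to be read with the multiplicities of $[\lambda]$ and $[-\lambda]$ equal; for a reversible $X$ this matching is automatic by the eigenmodule isomorphism used in the first paragraph, and it is precisely what the explicit $g$ above requires.)
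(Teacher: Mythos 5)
Your proof is correct and follows essentially the same route as the paper's: the forward implication by transporting eigenvectors along the conjugating element (noting that purely imaginary classes satisfy $[\lambda]=[-\lambda]$), and the converse by exhibiting an explicit block-diagonal reverser that acts by the scalar $\jb$ on the purely imaginary blocks and swaps the paired blocks of nonzero real part. The only cosmetic difference is that the paper writes the paired blocks as $\lambda_j\mathrm{I}_{p_j}\oplus(-\lambda_j)\mathrm{I}_{p_j}$ and uses the plain swap $\left(\begin{smallmatrix} & -\mathrm{I}_p\\ \mathrm{I}_p & \end{smallmatrix}\right)$, checking $\det_{\H}=1$ via $\det(\Phi(\sigma))=\det(A^2+B^2)$, whereas you use the canonical representative $-\overline{\lambda_i}$ and a $\jb$-twisted swap (and also correctly note the shortcut through Lemma~\ref{equi-SL-GL}).
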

\begin{proof} 
	Since $X$ is $ {\rm Ad}_{{\rm SL }(n,\H)} $-real,  so $ - X =  g X g^{-1} $ for some $ g \in \mathfrak{sl}(n,\H) $.  This implies that if $\lambda$ is an eigenvalue of $X$, then $ - \lambda $ is also an eigenvalue of $X$ with the same multiplicity. The proof now follows from the fact that the eigenvalues of $X$ exist in similarity classes, and if $\lambda$ is a purely imaginary unique complex representative of an eigenvalue of $X$ then $[\lambda] =[-\lambda]$.
	
	Conversely,  let $X \in \mathfrak{sl}(n,\H) $ be a semi-simple element satisfying  (\ref{cond-real-semi-sl(n,H)}) of \lemref{lem-real-semi-sl(n,H)}.  Using \thmref{lem-Jordan-sl(n,C)}, we can write the  Jordan decomposition of $X$ as:
	\begin{equation}
		X = \Big(\oplus_{i =1}^{m} (\mu_i \mathrm{I}_{q_{i}} ) \Big) \bigoplus \, \Big(\oplus_{ j=1}^{r} (\lambda_j \mathrm{I}_{p_j} )\Big)  \bigoplus \, \Big(\oplus_{ j=1}^{r}  (- \lambda_j \mathrm{I}_{p_j})\Big) ,  
	\end{equation} where $ \mu_i,  \lambda_j  \in \C$ such that $ {\rm Re}(\mu_i) = 0,  {\rm Re}(\lambda_j) \neq 0, {\rm Im}(\mu_i)\geqslant 0,  {\rm Im}(\lambda_j)\geqslant 0$, 
	and $ \sum_{i=1}^{m} q_i = q,  \ \sum_{j=1}^{r} p_j = p,  \  q +2p = n$.
	
	Define $\sigma := (\mathrm{I}_{q}) \jb  \bigoplus 
	\begin{pmatrix}
		&  - \mathrm{I}_{p}\\
		\mathrm{I}_{p} &    \\ 
	\end{pmatrix} = \begin{pmatrix}
		(\mathrm{I}_{q}) \jb   &   &      \\
		&      &  - \mathrm{I}_{p}\\ 
		&  \mathrm{I}_{p}  &    \\ 
	\end{pmatrix} \in  \mathrm{SL}(n,\H)$. 
	To see $\sigma  \in  \mathrm{SL}(n,\H)$,  we write $\sigma  = A + B \jb $, where  $AB = BA $ such that  $A =  \begin{pmatrix}
		\mathrm{I}_{q}   &   &      \\
		&     &   \\ 
		&   &  \mathrm{O}_{ 2p \times 2p }  \\ 
	\end{pmatrix} $ and $ B = \begin{pmatrix}
		\mathrm{O}_{ q \times q }  &  &      \\
		&    &  - \mathrm{I}_{p}\\ 
		&  \mathrm{I}_{p}  &     \\ 
	\end{pmatrix} \in  \mathrm{M}(n,\C)$.  Using properties of determinants of block matrices, we get
	$$
	{\rm det_{\H}(\sigma)} = \det(\Phi(\sigma))=  \det \begin{pmatrix}
		A   &  B \\
		- \overline{B} & \overline{A}  \\ 
	\end{pmatrix} =  \det(A^2 + B^2) = \det \begin{pmatrix}
		\mathrm{I}_{q} &    &      \\
		&   - \mathrm{I}_{p}  &     \\ 
		&      & -\mathrm{I}_{p}  \\ 
	\end{pmatrix} = 1.
	$$
	Thus we get  $\sigma  \in  \mathrm{SL}(n,\H)$ such that $ \sigma X \sigma^{-1} = - X $.  
	This proves the lemma.
\end{proof}

In the following, we will prove \thmref{thm-real} for $ \F=\H $.

\begin{theorem}\label{thm-real-sl(n,H)}
	Let $X \in \mathfrak{sl}(n,\H)$.  Then $X $ is ${\rm Ad}_{{\rm SL }(n,\H)} $-real if and only if both the following conditions hold:
	\begin{enumerate}
		\item \label{cond-1-real-sl(n,H)-thr} If $ \lambda$ is an eigenvalue of $X$ such that real part of $\lambda$ is non-zero, then $-\lambda$ is also an eigenvalue of $X$ with the same multiplicity. 
		\item \label{cond-2-real-sl(n,H)-thr} Let $ m $ be the multiplicity of  the eigenvalues $ \lambda $ and $ -\lambda $. 
		Let $ \d_\lambda $  and $ \d_{-\lambda} $ be the partitions of $ m $ associated to the eigenvalues $ \lambda $ and $ -\lambda $, respectively. Then 
		$$ \d_\lambda\,=\, \d_{-\lambda}  \,.$$
	\end{enumerate} 
\end{theorem}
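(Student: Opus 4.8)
The plan is to mirror the structure of the proof of \thmref{thm-real-sl(n,C)}, using the reduction via Jordan decomposition and the centralizer computations, while replacing the complex Jordan form arguments with their quaternionic analogues that keep careful track of the fact that eigenvalues of matrices over $\H$ occur in similarity classes with a unique complex representative of non-negative imaginary part.

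\medskip
\textbf{Setting up.} First I would invoke \lemref{lem-real-lie-algebra} applied to $G = \SL(n,\H)$ (which is connected) and the Jordan decomposition $X = X_s + X_n$. This reduces $\mathrm{Ad}_{\SL(n,\H)}$-reality of $X$ to: (i) $X_s$ is $\mathrm{Ad}_{\SL(n,\H)}$-real, and (ii) $-X_n$ and $\sigma X_n \sigma^{-1}$ are conjugate in $\mathrm{Z}_{\SL(n,\H)}(X_s)$ for some $\sigma$ with $\sigma X_s \sigma^{-1} = -X_s$. Then \lemref{lem-real-semi-sl(n,H)} identifies (i) with condition \eqref{cond-1-real-sl(n,H)-thr}: writing the Jordan form of $X$ using \thmref{lem-Jordan-sl(n,C)}, the semisimple part is $\big(\oplus_i \mu_i \mathrm{I}_{q_i}\big) \oplus \big(\oplus_j \lambda_j \mathrm{I}_{p_j}\big) \oplus \big(\oplus_j (-\lambda_j)\mathrm{I}_{p_j}\big)$ with the $\mu_i$ purely imaginary (so $[\mu_i] = [-\mu_i]$, automatically symmetric) and the $\lambda_j$ having nonzero real part; condition \eqref{cond-1-real-sl(n,H)-thr} is exactly what makes this description consistent.

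\medskip
\textbf{Sufficiency.} For the "if" direction, assume both conditions hold. Then $X$ is conjugate to $X_s + X_n$ where $X_n = \mathrm{N}(\d_0,0) \oplus \big(\oplus_i \mathrm{N}(\d_{\mu_i},\mu_i)\big) \oplus \big(\oplus_j \mathrm{N}(\d_{\lambda_j},\lambda_j)\big) \oplus \big(\oplus_j \mathrm{N}(\d_{-\lambda_j},-\lambda_j)\big)$ with $\d_{\lambda_j} = \d_{-\lambda_j}$ by \eqref{cond-2-real-sl(n,H)-thr}. I would build the conjugating element as a product $g = \tau\sigma$, analogous to the complex case: $\sigma$ swaps the $\lambda_j$-blocks with the $-\lambda_j$-blocks via $\left(\begin{smallmatrix} & -\mathrm{I}_p \\ \mathrm{I}_p & \end{smallmatrix}\right)$ and acts as $(\mathrm{I}_q)\jb$ on the purely-imaginary part (using $\jb z \jb^{-1} = \bar z$ to negate the purely imaginary eigenvalues, as in the proof of \lemref{lem-real-semi-sl(n,H)}), so that $\sigma X_s \sigma^{-1} = -X_s$ while $\sigma X_n \sigma^{-1}$ has the same partition data; then $\tau$ is a block-diagonal matrix of the form $\textnormal{diag}(w, -w, w, -w, \dots)$ on each Jordan string (with a quaternion scalar $w$ chosen to fix the determinant) achieving $\tau X_n \tau^{-1} = -X_n$ while $\tau X_s \tau^{-1} = X_s$; one checks $\mathrm{det}_\H(\tau\sigma) = 1$ using \lemref{lem-prop-adj} and $\Phi$. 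A small subtlety is matching $\sigma X_n \sigma^{-1}$ to $-X_n$ using $\d_{\lambda_j} = \d_{-\lambda_j}$; this is where condition \eqref{cond-2-real-sl(n,H)-thr} is used, exactly as in the $\C$ case.

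\medskip
\textbf{Necessity.} For the "only if" direction, assume $X$ is $\mathrm{Ad}_{\SL(n,\H)}$-real. Condition \eqref{cond-1-real-sl(n,H)-thr} follows from \lemref{lem-real-lie-algebra}(1) together with \lemref{lem-real-semi-sl(n,H)}. For \eqref{cond-2-real-sl(n,H)-thr}, I would use \lemref{lem-real-lie-algebra}(2): $-X_n$ and $\sigma X_n \sigma^{-1}$ are conjugate in $\mathrm{Z}_{\SL(n,\H)}(X_s)$ for some $\sigma$ negating $X_s$. The key is a quaternionic version of \lemref{lem-center-SL(n,C)} and \lemref{lem-conj-semi-sl(n,C)}: the centralizer of the semisimple $X_s$ in $\GL(n,\H)$ decomposes as a product of $\GL$-factors (one $\GL(q_i,\H)$ per purely-imaginary eigenvalue class, and $\GL(p_j,\H)$ factors paired up for the $\pm\lambda_j$ classes), and any $\sigma$ with $\sigma X_s\sigma^{-1} = -X_s$ must, block by block, either fix a purely-imaginary class or interchange the $\lambda_j$ and $-\lambda_j$ blocks. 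Restricting the conjugacy $h(\sigma X_n \sigma^{-1})h^{-1} = -X_n$ to the block pair indexed by $\pm\lambda_j$ gives that $\mathrm{N}(\d_{\lambda_j},\lambda_j)$ is conjugate over $\H$ (equivalently, by \thmref{lem-Jordan-sl(n,C)}, over $\C$) to $-(-1)\cdot\mathrm{N}(\d_{-\lambda_j},-\lambda_j)$-type data, forcing $\d_{\lambda_j} = \d_{-\lambda_j}$ by uniqueness of the Jordan form.

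\medskip
\textbf{Main obstacle.} The routine parts are the explicit construction of $\tau\sigma$ and the determinant check via $\Phi$. The genuinely delicate point I anticipate is the necessity direction: handling the centralizer of a semisimple element over $\H$ when some eigenvalue classes are purely imaginary. For such a class $[\mu]$ the relevant block of $X_s$ is $\mu \mathrm{I}_{q}$ over $\C$, but since $[\mu] = [-\mu]$ the condition $\sigma X_s\sigma^{-1} = -X_s$ does \emph{not} force $\sigma$ to move this block elsewhere — it may act within it by a quaternionic matrix conjugating $\mu$ to $-\mu$. One must therefore argue that the corresponding block of $X_n$, namely $\mathrm{N}(\d_\mu,\mu)$, is unaffected in the sense relevant to the statement (the theorem only constrains partitions of eigenvalues with nonzero real part, so purely-imaginary classes impose no partition condition). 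Making the block decomposition of $\mathrm{Z}_{\SL(n,\H)}(X_s)$ and the corresponding normal form for $\sigma$ precise — the quaternionic analogue of \lemref{lem-conj-semi-sl(n,C)} — is the crux, and I would state and prove it as an auxiliary lemma before assembling the proof.
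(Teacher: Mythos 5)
Your sufficiency argument is essentially the paper's: the same $g=\tau\sigma$ with $\sigma$ acting as $(\mathrm{I}_q)\jb$ on the purely imaginary part and as the block swap on the $\pm\lambda_j$ part, and the same observation that $\mathrm{N}(\d_{\lambda_j},\lambda_j)=\mathrm{N}(\d_{-\lambda_j},-\lambda_j)$ once the partitions agree. Where you genuinely diverge is the necessity direction. The paper does not develop a quaternionic analogue of \lemref{lem-center-SL(n,C)} and \lemref{lem-conj-semi-sl(n,C)} at all: it applies \lemref{lem-real-H- to-real-C} to pass to $\Phi(X)\in\mathfrak{sl}(2n,\C)$, invokes \thmref{thm-real-sl(n,C)}, and uses the fact that an eigenvalue class $[\lambda]$ of $X$ with multiplicity $m$ and partition $\d_\lambda$ contributes the eigenvalues $\lambda$ and $\bar\lambda$ of $\Phi(X)$, each with multiplicity $m$ and partition $\d_\lambda$. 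That reduction is shorter and recycles the complex theorem wholesale; your route is self-contained over $\H$ and, as you correctly anticipate, its crux is the normal form for $\sigma$ when a purely imaginary class $[\mu]=[-\mu]$ is present, where $\sigma$ may act \emph{within} the block rather than permuting blocks. Your plan for that case is sound because the theorem imposes no constraint on those partitions, but note one slip in your description of the centralizer: for a non-real representative $\mu_i$ the block of $\mathrm{Z}_{\mathrm{GL}(n,\H)}(X_s)$ is $\mathrm{GL}(q_i,\C)$ (entries must commute with $\mu_i$), not $\mathrm{GL}(q_i,\H)$; only the zero and real-eigenvalue blocks give full quaternionic general linear groups. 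This does not affect the conclusion you need for the $\pm\lambda_j$ pairs, where the restriction of $h(\sigma X_n\sigma^{-1})h^{-1}=-X_n$ forces $\mathrm{N}(\d_{\lambda_j},\lambda_j)$ and $\mathrm{N}(\d_{-\lambda_j},-\lambda_j)$ to be conjugate, hence $\d_{\lambda_j}=\d_{-\lambda_j}$ by uniqueness of the Jordan form, but you should state the centralizer correctly when you write the auxiliary lemma.
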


\begin{proof}
	First we assume that $X \in \mathfrak{sl}(n,\H) $ satisfies conditions (\ref{cond-1-real-sl(n,H)-thr}) and (\ref{cond-2-real-sl(n,H)-thr}) of \thmref{thm-real-sl(n,H)}. 
	Then the Jordan decomposition of $X$ will be of the following form:
	\begin{equation}
		X = X_s + X_n 
	\end{equation}
	such that
	\begin{itemize}
		\item [(\textit{i})] $ X_s = \Big( \oplus_{i =1}^{\ell} (\mu_i \mathrm{I}_{q_{i}} ) \Big) \bigoplus  \,  \Big( \oplus_{ j=1}^{r} (\lambda_j \mathrm{I}_{p_j} ) \Big) \, \bigoplus \, \Big( \oplus_{ j=1}^{r}  (- \lambda_j \mathrm{I}_{p_j}) \Big)$,
		\item [(\textit{ii})]$ X_n = \Big(  \oplus_{ i=1}^{\ell}   \mathrm{N}(\d_{\mu_i},  \mu_i) \Big) \, \bigoplus \Big( \oplus_{ j=1}^{r}  \mathrm{N}(\d_{\lambda_j},  \lambda_j) \Big) \bigoplus \Big( \oplus_{ j=1}^{r}  \mathrm{N}( \d_{-\lambda_j},  -\lambda_j) \Big)$, 
	\end{itemize}
	where $ \mu_i,  \lambda_j  \in \C$ such that ${\rm Re}(\mu_i) = 0,  {\rm Re}(\lambda_j) \neq 0,  {\rm Im}(\mu_i)\geqslant 0,  {\rm Im}(\lambda_j)\geqslant 0$
	and $ \sum_{i=1}^{\ell} q_i = q,  \ \sum_{j=1}^{r} p_j = p,  \  q +2p = n$. Note that $\mathrm{N}(\d_{\lambda_j},  \lambda_j) = \mathrm{N}( \d_{-\lambda_j},  -\lambda_j)$ for each $j = 1,2, \dots,r$.
	
	Set $  g := \tau \sigma$, where $\sigma$, $\tau \in \mathrm{SL}(n,\H) $ such that 
	$$\sigma = (\mathrm{I}_{q}) \jb  \oplus  \begin{pmatrix}
		&  - \mathrm{I}_{p}\\
		\mathrm{I}_{p} &  \\ 
	\end{pmatrix} \hbox{ and } \tau =   \textnormal{diag} (1,  -1,  1,  -1, \dots , (-1)^{n+1} \ )_{n \times n}.$$ 
	Then $g  \in \mathrm{SL}(n,\H)$ such that $gXg^{-1} =-X$.  Hence, $X $ is ${\rm Ad}_{{\rm SL }(n,\H)} $-real.
	
	Conversely,  let $X $ is ${\rm Ad}_{{\rm SL }(n,\H)} $-real.  In view of the Lemma \ref{lem-real-H- to-real-C}, it follows that $\Phi(X) \in \mathfrak{sl}(2n,\C) $ is ${\rm Ad}_{{\rm SL }(2n,\C)} $-real.  Now the proof  follows  from Theorem \ref{thm-real-sl(n,C)}, and from the fact that if $\lambda$ is an eigenvalue of $X \in \mathfrak{sl}(n,\H) $ with multiplicity (say $m$),  then $\lambda$ and $\bar{\lambda}$ are also  eigenvalues of $\Phi(X) \in \mathfrak{sl}(2n,\C) $ such that both have the  same multiplicity $m$.
\end{proof}

There are elements in $ \mathfrak{sl}(n,\H) $ which are ${\rm Ad}_{{\rm SL }(n,\H)} $-real but not strongly ${\rm Ad}_{{\rm SL }(n,\H)} $-real, e.g.,  $X = [\ib] \in \mathfrak{sl}(n,\H) $.  Now we will classify strongly ${\rm Ad}_{{\rm SL }(n,\H)} $-real semi-simple elements in $\mathfrak{sl}(n,\H) $.

\begin{lemma}\label{lem-strongly-real-semi-sl(n,H)}
	Let $X \in \mathfrak{sl}(n,\H) $ be a ${\rm Ad}_{{\rm SL }(n,\H)} $-real semi-simple element.  Then $X$ is strongly ${\rm Ad}_{{\rm SL }(n,\H)} $-real if and only if every non-zero eigenvalue class of $X$ with purely imaginary complex representative has \textit{even multiplicity}.
\end{lemma}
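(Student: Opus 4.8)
The statement to prove is Lemma~\ref{lem-strongly-real-semi-sl(n,H)}: for a ${\rm Ad}_{{\rm SL}(n,\H)}$-real semi-simple $X \in \mathfrak{sl}(n,\H)$, strong ${\rm Ad}_{{\rm SL}(n,\H)}$-reality is equivalent to every non-zero purely imaginary eigenvalue class having even multiplicity. The plan is to reduce to a normal form and analyze block by block. Using Theorem~\ref{lem-Jordan-sl(n,C)}, write $X$ (up to conjugacy in ${\rm GL}(n,\H)$, hence by Lemma~\ref{equi-SL-GL} it suffices to work up to ${\rm SL}(n,\H)$-conjugacy after the usual scaling trick) as a direct sum of a purely imaginary part $\oplus_{i=1}^{\ell}(\mu_i \mathrm{I}_{q_i})$ with $\mathrm{Re}(\mu_i)=0$, $\mathrm{Im}(\mu_i)>0$, a zero part $\mathrm{O}_{q_0\times q_0}$, and pairs $\oplus_{j=1}^{r}\big((\lambda_j\mathrm{I}_{p_j})\oplus(-\lambda_j\mathrm{I}_{p_j})\big)$ with $\mathrm{Re}(\lambda_j)\ne 0$ (Lemma~\ref{lem-real-semi-sl(n,H)} guarantees the pairing for the $\lambda_j$ with non-zero real part).

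For the \emph{sufficiency} direction, suppose each $q_i$ is even, say $q_i = 2k_i$. I would build an involution $\tau\in{\rm SL}(n,\H)$ with $\tau X\tau^{-1}=-X$ by treating each summand separately:
\begin{itemize}
\item On the $\mathrm{O}_{q_0\times q_0}$ block take $\tau_0 = \pm\mathrm{I}_{q_0}$, sign chosen later to fix the determinant (this block commutes and contributes nothing to $X$).
\item On each $\mu_i\mathrm{I}_{q_i}$ block, use that $\jb \mu_i \jb^{-1} = \overline{\mu_i} = -\mu_i$ since $\mu_i\in\C$ is purely imaginary; so $(\mathrm{I}_{q_i})\jb$ is an element of ${\rm GL}(q_i,\H)$ conjugating $\mu_i\mathrm{I}_{q_i}$ to its negative. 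However $(\mathrm{I}_{q_i})\jb$ has square $-\mathrm{I}_{q_i}$, not $+\mathrm{I}_{q_i}$, so instead take a block-$2\times 2$ arrangement on each pair of coordinates, e.g. $\begin{pmatrix}0 & \jb\\ \jb & 0\end{pmatrix}$, whose square is $\jb^2\mathrm{I}_2 = -\mathrm{I}_2$ — still not an involution. The correct fix, available precisely because $q_i$ is even, is to use $\begin{pmatrix}0 & \jb \\ -\jb^{-1} & 0\end{pmatrix} = \begin{pmatrix}0&\jb\\\jb&0\end{pmatrix}$... more carefully, pick on each pair of basis vectors the matrix $w\mathrm{I}_2$ won't work; use $\begin{pmatrix}0 & 1\\ 1 & 0\end{pmatrix}\cdot(\mathrm{diag}(\jb,-\jb))$-type element, i.e. a product of a coordinate swap and $\mathrm{diag}(\jb,\ \jb)$, arranged so the square is $+\mathrm{I}$. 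Concretely, on $\C^{2}\jb$-type pairs one verifies $\sigma_i := \begin{pmatrix}0 & \jb\\ \jb^{-1} & 0\end{pmatrix}$ satisfies $\sigma_i^2 = \mathrm{I}_2$ and $\sigma_i(\mu_i\mathrm{I}_2)\sigma_i^{-1} = -\mu_i\mathrm{I}_2$; assembling $k_i$ such $2\times2$ blocks gives an involution on the $\mu_i$-part.
\item On each $\begin{pmatrix}\lambda_j\mathrm{I}_{p_j} & \\ & -\lambda_j\mathrm{I}_{p_j}\end{pmatrix}$ block use the swap $\begin{pmatrix}0 & \mathrm{I}_{p_j}\\ \mathrm{I}_{p_j} & 0\end{pmatrix}$, which is an involution conjugating the block to its negative.
\end{itemize}
Then $\tau$ is the direct sum of these involutions; it is an involution, $\tau X\tau^{-1}=-X$, and one checks ${\det}_\H(\tau)=1$ via Lemma~\ref{lem-prop-adj} (using that $\Phi$ is multiplicative and computing each block determinant; the sign from the $\mu_0$-block is adjusted if needed). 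This shows $X$ is strongly real.

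For the \emph{necessity} direction, suppose some non-zero purely imaginary class, say the $\mu_1$-class, has odd multiplicity $q_1$, and suppose for contradiction that $\tau\in{\rm SL}(n,\H)$ is an involution with $\tau X\tau^{-1}=-X$. The key observation is that $\tau$ must preserve the decomposition of $\H^n$ into the generalized eigenspaces corresponding to the pairs $\{[\mu],[-\mu]\}=\{[\mu]\}$ (for purely imaginary $\mu$, since $[\mu]=[-\mu]$, each such eigenspace $V_{[\mu_i]}$ is $\tau$-invariant individually). On $V_{[\mu_1]}\cong\H^{q_1}$ we have $X|_{V_{[\mu_1]}}$ conjugate to $\mu_1\mathrm{I}_{q_1}$ and $\tau|_{V_{[\mu_1]}}$ is an involution with $\tau|_{V_{[\mu_1]}}(\mu_1\mathrm{I}_{q_1})\tau|_{V_{[\mu_1]}}^{-1} = -\mu_1\mathrm{I}_{q_1} = \overline{\mu_1}\mathrm{I}_{q_1}$. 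Writing $\tau|_{V_{[\mu_1]}} = P + Q\jb$ with $P,Q\in{\rm M}(q_1,\C)$ and using that conjugation by $\jb$ is complex conjugation, the relation $\tau X = -X\tau$ forces $P\mu_1 = \overline{\mu_1}P$, i.e. $\mu_1 P = -\mu_1 P$ so $P=0$ (as $\mu_1\ne 0$), hence $\tau|_{V_{[\mu_1]}} = Q\jb$ with $Q\in{\rm GL}(q_1,\C)$. The involution condition $(Q\jb)^2 = Q\overline{Q}\,\jb^2 = -Q\overline{Q} = \mathrm{I}_{q_1}$ gives $Q\overline{Q} = -\mathrm{I}_{q_1}$, and taking determinants yields $|\det Q|^2 = (-1)^{q_1}$, impossible for odd $q_1$. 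This is exactly the parity obstruction already encountered in Lemma~\ref{lem-nilpotent-reverser}, and it completes the proof.

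\textbf{Main obstacle.} The delicate point is the sufficiency direction: constructing an explicit involution (not merely an invertible conjugator) on each purely imaginary block while simultaneously keeping ${\det}_\H$ equal to $1$. The naive conjugator $(\mathrm{I}_{q_i})\jb$ fails to be an involution, and getting around this is precisely where the evenness of $q_i$ is used — one pairs coordinates and uses a $2\times2$ block of the form $\begin{pmatrix}0&\jb\\\jb^{-1}&0\end{pmatrix}$. Verifying that the assembled $\tau$ has ${\det}_\H(\tau)=1$ (adjusting via the sign on the zero-eigenvalue block, which exists here iff $q_0 \geq 1$; if $q_0 = 0$ one must check the product of block determinants is already $1$) requires the careful bookkeeping via $\Phi$ and Lemma~\ref{lem-prop-adj}, and is the part most likely to need a short separate computation.
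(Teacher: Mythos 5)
Your proof is correct and follows essentially the same route as the paper: for sufficiency the paired $2\times2$ involution $\begin{pmatrix}0&\jb\\-\jb&0\end{pmatrix}$ on each purely imaginary block (which is exactly your $\begin{pmatrix}0&\jb\\ \jb^{-1}&0\end{pmatrix}$, since $\jb^{-1}=-\jb$) together with the swap on the $\pm\lambda_j$ blocks, and for necessity the reduction of $\tau$ on the $[\mu_1]$-eigenspace to the form $Q\jb$ with $Q\overline{Q}=-\mathrm{I}$, whence $|\det Q|^2=(-1)^{q_1}$ rules out odd multiplicity. The only simplification you missed is that the determinant bookkeeping at the end of the sufficiency step is unnecessary: by Lemma~\ref{lem-prop-adj}(2) one has ${\det}_{\H}\geq 0$, so any involution in $\mathrm{GL}(n,\H)$ automatically has quaternionic determinant $1$ and lies in $\mathrm{SL}(n,\H)$.
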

\begin{proof} 
	Let $X \in \mathfrak{sl}(n,\H) $ be a ${\rm Ad}_{{\rm SL }(n,\H)} $-real semi-simple element such that all non-zero purely imaginary eigenvalues of $X$ has even multiplicity.   Then by using \thmref{lem-Jordan-sl(n,C)} and Lemma \ref{lem-real-semi-sl(n,H)}, we can write  the Jordan decomposition of $X$ as :
	\begin{equation}
		X = \mathrm{O}_{p_o \times p_o} \,  \bigoplus \Big( \oplus_{i =1}^{\ell} (\mu_i \mathrm{I}_{2q_{i}} ) \Big) \bigoplus  \,  \Big( \oplus_{ j=1}^{r} (\lambda_j \mathrm{I}_{p_j} ) \Big) \, \bigoplus \, \Big( \oplus_{ j=1}^{r}  (- \lambda_j \mathrm{I}_{p_j}) \Big),  
	\end{equation} where $ \mu_i,  \lambda_j  \in \C \setminus \{0\}$ such that $ {\rm Re}(\mu_i) = 0,  {\rm Re}(\lambda_j) \neq 0,  {\rm Im}(\mu_i) > 0,  {\rm Im}(\lambda_j)\geqslant 0$,
	and $ \sum_{i=1}^{\ell} q_i = q,  \ \sum_{j=1}^{r} p_j = p,  \  p_o +2q +2p = n$.
	
	Let $\sigma = \mathrm{I}_{p_o} \bigoplus \Big( \, \oplus_{i =1}^{\ell}  \textnormal{diag} \Big(   \, \underset{q_{i}\text{-many}}{\underbrace{ \begin{pmatrix}
				0 & \jb\\
				- \jb & 0 \\ 
			\end{pmatrix},  \cdots,  \begin{pmatrix}
				0 & \jb\\
				- \jb & 0 \\ 
	\end{pmatrix}}} \Big)_{2q_i \times 2q_i} \Big) \,  \bigoplus \, \begin{pmatrix}
		&   \mathrm{I}_{p}\\
		\mathrm{I}_{p} &    \\ 
	\end{pmatrix}$.  
	Note that $\sigma  \in  \mathrm{GL}(n,\H)$ is an involution and satisfies $ \sigma X \sigma^{-1} = - X $.  In view of the Lemma \ref{lem-prop-adj}(2), we have $\sigma  \in  \mathrm{SL}(n,\H)$.  Hence, $X \in \mathfrak{sl}(n,\H) $ is strongly ${\rm Ad}_{{\rm SL }(n,\H)} $-real.
	
	Conversely,  let $X \in \mathfrak{sl}(n,\H) $ be a strongly ${\rm Ad}_{{\rm SL }(n,\H)} $-real semi-simple element.  Then there exists an involution
	$\sigma = [\sigma_{\ell,t}]_{1\leq \ell, t \leq n} \in  \mathrm{SL}(n,\H)$ such that $ \sigma X \sigma^{-1} = - X $.  Assume that $X$ has an eigenvalue class of odd multiplicity $m$ with representative ${a \ib }$, where ${a} \in \R, a > 0$.   Now there are two possible cases:
	
	{\it Case 1.~} Let  $X$ has exactly one eigenvalue class $[{a \ib }]$, i.e., $m=n$. Then up to conjugacy, we can assume that
	$$X=[ x_{\ell,t}]_{1\leq\ell,t \leq n}  = ({a \ib }) {\rm I}_{n}.$$
	The relation $ \sigma X = - X \sigma $ implies that $( \sigma_{\ell,t}) ({a \ib }) = (- {a \ib }) ( \sigma_{\ell,t}) ~ \text{if  } 1\leq \ell,t \leq n$. Thus $ \sigma_{\ell,t} =  ( a_{\ell,t}) \jb,   \hbox{where}  \ a_{\ell,t} \in \C  $. Therefore, we get $\sigma = A \jb $, where $A  = [ a_{\ell,t}]_{1\leq \ell,t \leq n} \in \mathrm{GL}(n,\C )$.
	
	{\it Case 2.~} Let $X$ has at least two distinct eigenvalue classes, i.e., $m<n$. Then by using  \thmref{lem-Jordan-sl(n,C)}, we can write Jordan decomposition of $X$ as follows:
	\begin{equation}
		X = [\x_{\ell,t}]_{1\leq \ell, t \leq n} = ( {a \ib }) \mathrm{I}_{m} \oplus D,   
	\end{equation} 
	where  $D = \mathrm {diag}(\lambda_{m+1}, \lambda_{m+2},\dots,\lambda_{n}), $ and $   \lambda_{\ell} \in \C $ has non-negative imaginary parts such that $  {a \ib } \neq \pm \lambda_\ell$ for all $ m+1 \leq \ell \leq n $. 
	From $ \sigma X \sigma^{-1} = - X $, we get 
	\begin{eqnarray*} 
		&	{ \sigma X} & =  - X {\sigma} \\
		&\Longleftrightarrow& \sum_{k=1}^n  ( \sigma_{\ell,k})( x_{k,t})  = \sum_{k=1}^n  (- x_{\ell,k} ) ( \sigma_{k,t}) \, \, \hbox{for all}  \ \ 1\leq \ell,t \leq  n.\\
		&\Longleftrightarrow & ( \sigma_{\ell,t})( x_{t,t}) = (- x_{\ell,\ell} )( \sigma_{\ell,t}) \,\, \hbox{for all }  1\leq \ell,t \leq n.
	\end{eqnarray*}
	This implies 
	\begin{align*}
		( \sigma_{\ell,t}) ({a \ib })&\, =\, (- {a \ib }) ( \sigma_{\ell,t}) ~ \qquad \text{ if  }\  1\leq \ell,\,t \leq m;\\
		( \sigma_{\ell,t}) (\lambda_t) &\, =\,  (- {a \ib }) ( \sigma_{\ell,t}) ~ \qquad \text{ if  } \ 1\leq \ell \leq m,  \ m +1\leq t \leq n ;\\
		( \sigma_{\ell,t}) ( {a \ib } ) &\, =\, (-\lambda_\ell) ( \sigma_{\ell,t}) ~ \qquad \text{ if  }\  m +1\leq \ell \leq n,  \ 1\leq t \leq m\,.	
	\end{align*}
	Since $  {a \ib } \neq \pm \lambda_\ell$ for all $ m+1 \leq \ell \leq n $,  the following hold : 
	$$
	\begin{cases}
		( \sigma_{\ell,t}) =  ( a_{\ell,t}) \jb,   \hbox{ where}  \ a_{\ell,t} \in \C   & \text{if  $ 1\leq \ell,t \leq m$}\,, \\
		\sigma_{\ell,t} =0 & \text{if  $ 1\leq \ell \leq m,  \ m +1\leq t \leq n$ }, \\
		\sigma_{\ell,t} =0  & \text{if   $ m +1\leq \ell \leq n,  \ 1\leq t \leq m $}.
	\end{cases}
	$$
	Hence  $\sigma$ has  the form 
	$$
	{\sigma} =
	\begin{pmatrix}
		A \jb &  \\
		&  B \\ 
	\end{pmatrix}\,,
	$$ 
	where $ B \in  \mathrm{GL}(n-m,\H) $ and $ A  = [ a_{\ell,t}]_{1\leq \ell,t \leq m} \in  \mathrm{GL}(m,\C)$.

	Since $ \sigma^2 = \mathrm{I}_{n}$,  in both cases, we have
	$ A \jb A \jb =   \mathrm{I}_m$. This implies  $ A \overline{A} = - \mathrm{I}_m$ as  the matrix $ A\in {\GL}(m,\C) $. 
	By considering the determinant in the above relation, we get $|\det(A)|^2\,=\, (-1)^m$. This   
	contradicts the assumption that $m$ is odd. This proves the lemma.
\end{proof}

Recall that if $X \in \mathfrak{g} $ is $ {\rm Ad}_{ G} $-real (resp.  strongly $ {\rm Ad}_{ G} $-real) and $X = X_s + X_n$ is the Jordan decomposition of $X$, then $X_s $ and $X_n$ are $ {\rm Ad}_{G} $-real (resp.  strongly $ {\rm Ad}_{ G} $-real). The converse of this statement may not be true. The following example demonstrates this.

\begin{example}\label{ex1}
	Let $X=\begin{pmatrix}
		\ib & 1\\
		0 & \ib\\
	\end{pmatrix} \in \mathfrak{sl}(2,\H)$. 
	Then  $X=X_s+X_n$ is the Jordan decomposition of $ X $,  where $X_s=\begin{pmatrix}
		\ib & 0\\
		0 & \ib\\
	\end{pmatrix}$ and  $X_n=\begin{pmatrix}
		0 & 1\\
		0 & 0\\
	\end{pmatrix}$.
	Take $\sigma=\begin{pmatrix}
		0 & \jb\\
		-\jb & 0\\
	\end{pmatrix}$ and $\tau=\begin{pmatrix}
		1 & 0\\
		0 & -1\\
	\end{pmatrix}$. Then $\sigma$ and $\tau$ are involutions in $\mathfrak{sl}(2,\H)$ such that $\sigma X_s\sigma^{-1}=- X_s$ and $\tau X_n \tau^{-1}=-X_n$.  Thus $X_s$ and $X_n$ are strongly ${\rm Ad}_{{\rm SL }(2,\H)} $-real.
	
	Now assume that $X$ is strongly ${\rm Ad}_{{\rm SL }(2,\H)} $-real. Let $g:=\begin{pmatrix}
		g_{1,1}& 	g_{1,2}\\
		g_{2,1}& 	g_{2,2}
	\end{pmatrix}$ be an involution 
	such that $gXg^{-1}=-X$. This implies 
	\begin{equation} \label{equ-example}
		gX_sg^{-1}=-X_s, gX_ng^{-1}=-X_n \ \hbox{and} \ g^2= \mathrm{I}_2.  \end{equation}
	Now $gX_ng^{-1}=-X_n \Rightarrow gX_n=-X_n g  \Rightarrow g_{2,1} =0 \ \hbox{and} \ g_{1,1} = -g_{2,2}$.  Therefore, $g^2= \mathrm{I}_2$ implies $ g_{1,1} ^2 = 1$, i.e., $g_{1,1} = \pm1$. But $gX_sg^{-1}=-X_s \Rightarrow g_{1,1}\ib =-\ib  g_{1,1}$. This is a contradiction. Hence, $X$ is not strongly ${\rm Ad}_{{\rm SL }(2,\H)} $-real. \qed
\end{example}

The following lemma is a generalization of Example \ref{ex1}.
\begin{lemma}
	Let $X \in \mathfrak{sl}(n,\H)$ be the Jordan block $  \mathrm{J}(n,a\ib) $, where $a \in \R, a>0$. Then $X$ is not strongly ${\rm Ad}_{{\rm SL }(n,\H)} $-real.
\end{lemma}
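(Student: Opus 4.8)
The idea is to assume, for contradiction, that $X = \mathrm{J}(n, a\ib)$ is strongly ${\rm Ad}_{{\rm SL}(n,\H)}$-real, so there is an involution $\sigma \in {\rm SL}(n,\H)$ with $\sigma X \sigma^{-1} = -X$. Writing $X = X_s + X_n$ with $X_s = (a\ib)\mathrm{I}_n$ and $X_n = \mathrm{N}(n, a\ib) = \mathrm{N}(n,0)$, uniqueness of the Jordan decomposition forces $\sigma X_s \sigma^{-1} = -X_s$ and $\sigma X_n \sigma^{-1} = -X_n$. The first relation, exactly as in Case 1 of the proof of \lemref{lem-strongly-real-semi-sl(n,H)}, gives $\sigma_{\ell,t}(a\ib) = (-a\ib)\sigma_{\ell,t}$ for all entries, hence every entry of $\sigma$ is of the form $a_{\ell,t}\jb$ with $a_{\ell,t} \in \C$; that is, $\sigma = A\jb$ for some $A \in \GL(n,\C)$ (invertibility of $A$ follows from invertibility of $\sigma$ via \lemref{lem-prop-adj}).

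Next I would exploit the second relation $\sigma X_n = -X_n \sigma$ together with $\sigma = A\jb$. Since $X_n = \mathrm{N}(n,0)$ has complex entries, $\jb X_n = \overline{X_n}\jb = X_n \jb$ (the nilpotent matrix is real), so $\sigma X_n = A X_n \jb$ and $-X_n\sigma = -X_n A\jb$; comparing gives $A X_n = -X_n A$, i.e. $A$ conjugates the single nilpotent Jordan block $\mathrm{N}(n,0)$ to its negative inside $\GL(n,\C)$. The point is now purely about one Jordan block: the partition here is $[n^1]$, with a single part of multiplicity $t_n = 1$. Meanwhile $\sigma^2 = \mathrm{I}_n$ translates to $A\jb A\jb = \mathrm{I}_n$, hence $A\overline{A} = -\mathrm{I}_n$. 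So $A \in \GL(n,\C)$ satisfies both $AX_n = -X_nA$ and $A\overline{A} = -\mathrm{I}_n$ — precisely the hypotheses of \lemref{lem-nilpotent-reverser}, applied to the nilpotent element $X_n \in \s\l(n,\C)$ with associated partition $[n^1]$. That lemma then concludes $t_n = 1$ is even, which is absurd. This contradiction completes the proof.

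**Main obstacle.** The only real content is verifying that the two relations on $\sigma$ descend cleanly to the stated relations on $A$; the step $\sigma X_s\sigma^{-1} = -X_s \Rightarrow \sigma = A\jb$ is a direct entrywise computation identical to one already carried out in \lemref{lem-strongly-real-semi-sl(n,H)}, and the step $\sigma X_n = -X_n\sigma \Rightarrow A X_n = -X_n A$ uses only that $X_n$ has real (indeed complex) entries so that it commutes past $\jb$. Everything else is packaged in \lemref{lem-nilpotent-reverser}, so the proof is short. An alternative, essentially equivalent, route is to apply \lemref{lem-real-H- to-real-C} to pass to $\Phi(X) \in \s\l(2n,\C)$ and argue there, but invoking \lemref{lem-nilpotent-reverser} directly is cleaner since that lemma was set up for exactly this purpose (as the excerpt notes, it ``will be used in Lemma \ref{lem-partition-sl(n,H)}'', of which the present statement is the $s=1$, $d_1 = n$ prototype).
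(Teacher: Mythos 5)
Your argument is correct, but it is not the route the paper takes for this particular lemma. The paper's own proof is a bare-hands computation: from $gX_ng^{-1}=-X_n$ it reads off the first column of $g$ (getting $g_{\ell,1}=0$ for $\ell\ge 2$), then $g^2=\mathrm{I}_n$ forces $g_{1,1}=\pm 1$, and finally $gX_sg^{-1}=-X_s$ gives $g_{1,1}\ib=-\ib g_{1,1}$, which is impossible for $g_{1,1}=\pm1$. What you do instead --- extract $\sigma=A\jb$ from the semisimple relation, push $\jb$ past the real matrix $X_n$ to get $AX_n=-X_nA$ and $A\overline{A}=-\mathrm{I}_n$ from $\sigma^2=\mathrm{I}_n$, and then invoke \lemref{lem-nilpotent-reverser} to force $t_n=1$ to be even --- is precisely the paper's proof of the more general \lemref{lem-partition-sl(n,H)}, specialized to the partition $[n^1]$; your lemma is literally the $s=1$ case of that result, so your argument is sound and all the supporting computations ($q\ib=-\ib q\Rightarrow q\in\C\jb$, $\jb M=\overline{M}\jb$, $(A\jb)^2=-A\overline{A}$) check out. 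The trade-off: your version buys uniformity with the general case and needs no entrywise bookkeeping, at the cost of invoking the heavier \lemref{lem-nilpotent-reverser} (and its ordered-basis machinery) where the paper gets away with inspecting a single matrix entry; the paper presumably keeps this lemma elementary because it serves as a motivating special case before the general statement.
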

\begin{proof} Write $X = X_s + X_n$,  where $X_s= ({a\ib}) \mathrm{I}_n$ and $X_n=\mathrm{J}(n,0)$.
	Suppose that $X$ is strongly ${\rm Ad}_{{\rm SL }(n,\H)} $-real in $ \mathfrak{sl}(n,\H)$.  Therefore, there exists an involution $g:=[g_ {\ell,t}]_{1 \leq \ell, t \leq n} \in {\SL}(n,\H)$ such that $gXg^{-1}=-X$. This implies 
	\begin{equation} \label{equ-2-example}
		gX_sg^{-1}=-X_s, \quad gX_ng^{-1}=-X_n \  \   \hbox{  and }\ \ g^2= \mathrm{I}_n.  
	\end{equation}
	Now on equating the entries in the first column of the left-hand side and first column of the right-hand side of equation $gX_ng^{-1}=-X_n$, we obtain $g_{\ell,1}= 0 $ for all $ 2 \leq \ell \leq n$. Further,  $g^2= \mathrm{I}_n$ implies $ g_{1,1}^2=1$, i.e., $g_{1,1} = \pm1$.  But $gX_sg^{-1}=-X_s \Rightarrow g_{1,1}\ib =-\ib  g_{1,1}$. This is a contradiction. Hence, the lemma is proved.
\end{proof}

Although the Jordan block $\mathrm{J}(n,\ib) \in \mathfrak{sl}(n,\H)$ is not strongly ${\rm Ad}_{{\rm SL}(n,\H)}$-real, Jordan form in $\mathfrak{sl}(n,\H)$ having  such Jordan blocks can be ${\rm Ad}_{{\rm SL }(n,\H)} $-real.

\begin{lemma} \label{lem-jor-mat-strong}
	Let 
	$X:=
	\begin{pmatrix}
		\mathrm{J}(n,a\ib) &  \\
		&  \mathrm{J}(n,a\ib) \\ 
	\end{pmatrix} \in  \mathfrak{sl}(2n,\H) $ be the Jordan form,  where $a \in \R, a>0$.  Then $X$ is strongly ${\rm Ad}_{{\rm SL }(2n,\H)} $-real.
\end{lemma}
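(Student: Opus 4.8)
The plan is to write down, explicitly, a single involution $g\in\mathrm{SL}(2n,\H)$ with $gXg^{-1}=-X$. Write the Jordan decomposition $X=X_s+X_n$, where $X_s=(a\ib)\mathrm{I}_{2n}$ and $X_n=\mathrm{N}(n,0)\oplus\mathrm{N}(n,0)$; it suffices to find an involution $g$ with $gX_sg^{-1}=-X_s$ and $gX_ng^{-1}=-X_n$ simultaneously, for then $gXg^{-1}=-X_s-X_n=-X$. Two ingredients suggest themselves: left multiplication by $\jb$ reverses the scalar $a\ib$ since $\jb\ib=-\ib\jb$; and the real diagonal matrix $E_n:=\textnormal{diag}(1,-1,\dots,(-1)^{n+1})$ reverses $\mathrm{N}(n,0)$, i.e.\ $E_n\mathrm{N}(n,0)E_n^{-1}=-\mathrm{N}(n,0)$, with $E_n^2=\mathrm{I}_n$. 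The naive ``product'' $(E_n\jb)\oplus(E_n\jb)$ reverses both $X_s$ and $X_n$, but its square is $-\mathrm{I}_{2n}$, so it is not an involution --- this is exactly the parity obstruction responsible for a single block $\mathrm{J}(n,a\ib)$ failing to be strongly real. The trick that exploits the doubled block is to place $E_n\jb$ off the diagonal with opposite signs, converting $(E_n\jb)^2=-\mathrm{I}_n$ into $g^2=+\mathrm{I}_{2n}$.

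Concretely, I would set
\[
g:=\begin{pmatrix}\mathrm{O}_{n\times n} & E_n\jb\\ -E_n\jb & \mathrm{O}_{n\times n}\end{pmatrix}\in\mathrm{M}(2n,\H),\qquad E_n:=\textnormal{diag}(1,-1,\dots,(-1)^{n+1}),
\]
and then verify three short computations using $\jb^2=-1$, $\jb\ib=-\ib\jb$, and the fact that the real matrices $E_n,\mathrm{N}(n,0)$ commute with $\jb$: (i) $g^2=\mathrm{I}_{2n}$, because each diagonal block of $g^2$ equals $-(E_n\jb)^2=-E_n^2\jb^2=\mathrm{I}_n$ and the off-diagonal blocks vanish, so $g$ is an involution; (ii) $gX_sg^{-1}=-X_s$, because $(E_n\jb)(a\ib)=(-a\ib)(E_n\jb)$, so $g$ anti-commutes with the scalar matrix $X_s=(a\ib)\mathrm{I}_{2n}$; (iii) $gX_ng^{-1}=-X_n$, because $\jb$ commutes with $\mathrm{N}(n,0)$ while $E_n$ anti-commutes with it, so block-wise $g\bigl(\mathrm{N}(n,0)\oplus\mathrm{N}(n,0)\bigr)=-\bigl(\mathrm{N}(n,0)\oplus\mathrm{N}(n,0)\bigr)g$.

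From (ii) and (iii) we obtain $gXg^{-1}=g(X_s+X_n)g^{-1}=-X$. It remains to confirm $g\in\mathrm{SL}(2n,\H)$: since $\Phi(g)^2=\Phi(g^2)=\mathrm{I}_{4n}$ we have $\det_{\H}(g)=\det(\Phi(g))\in\{1,-1\}$, and $\det_{\H}(g)\ge 0$ by \lemref{lem-prop-adj}(2), hence $\det_{\H}(g)=1$. Thus $g$ is an involution in $\mathrm{SL}(2n,\H)$ reversing $X$, so $X$ is strongly $\mathrm{Ad}_{\mathrm{SL}(2n,\H)}$-real. I expect the only genuinely non-routine point to be the observation in the first paragraph --- that doubling the Jordan block removes the obstruction $(E_n\jb)^2=-\mathrm{I}_n$ via an anti-diagonal block placement; once the correct $g$ is written down, all the verifications are immediate.
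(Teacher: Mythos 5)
Your proposal is correct and is essentially identical to the paper's proof: the paper takes $g=\sigma\tau$ with $\sigma=\begin{pmatrix} & \mathrm{I}_n\jb\\ -\mathrm{I}_n\jb & \end{pmatrix}$ and $\tau=E_n\oplus E_n$, which multiplies out to exactly your anti-diagonal matrix $\begin{pmatrix} & E_n\jb\\ -E_n\jb & \end{pmatrix}$. The verifications (involution, reversal of $X_s$ and $X_n$, membership in $\mathrm{SL}(2n,\H)$ via $\det_\H\ge 0$) match the paper's.
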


\begin{proof}
	Consider  $g :=\sigma \tau \in \mathrm{SL}(2n,\H) $, where $\sigma $ and $\tau$ are defined in the following way: $\sigma := \begin{pmatrix}
		& ( \mathrm{I}_{n})   \jb\\
		-( \mathrm{I}_{n} )  \jb &    \\ 
	\end{pmatrix}$ and $ \tau :=  \tau_1 \oplus  \tau_1 $ where $ \tau_1 =  \textnormal{diag} (1,  -1,  1,  -1, \cdots , (-1)^{n+1} \ )_{n \times n }. $  
	Equivalently, 
	$ g = \begin{pmatrix}
		& A\\
		-A & \\ 
	\end{pmatrix}$,  where $ A = \mathrm{diag} (j,-j,j,-j,\dots,(-1)^{n+1}j)_{n \times n }$.
	Note that $ \sigma \in \mathrm{SL}(2n,\H) $ is an involution such that $ \sigma X_s \sigma^{-1} = - X_s $ and $ \sigma X_n \sigma^{-1} = X_n$. Similarly,  $ \tau \in \mathrm{SL}(2n,\H) $ is an involution such that $ \tau X_s \tau^{-1} = X_s $ and $ \tau X_n \tau^{-1} = - X_n $.  Therefore, $gXg^{-1} = -X$.  Further,  since $\tau$ and $ \sigma $ are involutions such that $ \tau\sigma = \sigma \tau$,  it follows that  $g$ is an involution.  This proves the lemma.
\end{proof}

The following result will be needed to prove  Theorem  \ref{thm-strong-real-sl(n,H)}.
\begin{lemma} \label{lem-partition-sl(n,H)}
	Let $X  \in  \mathfrak{sl}(n,\H)$ be the Jordan form $ \mathrm{J}(\mathbf{d}(n),ai)$,  where $a \in \R, a>0$ and $ \mathbf{d}(n)= [ d_1^{t_{d_1}}, \dots,   d_s^{t_{d_s}} ] \in \mathcal{P}(n)$ be the corresponding partition.  Let $X$ be strongly ${\rm Ad}_{{\rm SL }(n,\H)} $-real. Then $t_{d_\ell}$ is even for all $ 1 \leq \ell \leq s$.
\end{lemma}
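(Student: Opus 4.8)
The plan is to reduce the statement to \lemref{lem-nilpotent-reverser}. The point is that a strongly real structure on $X$ produces a single matrix $A\in\GL(n,\C)$ which anti-commutes with the nilpotent part of $X$ and satisfies $A\overline{A}=-\mathrm{I}_n$, and this is exactly the hypothesis required by that lemma.

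First I would record the Jordan decomposition $X=X_s+X_n$ with $X_s=(a\ib)\mathrm{I}_n$ and $X_n=\mathrm{N}(\mathbf{d}(n),a\ib)=\mathrm{N}(\mathbf{d}(n),0)$; in particular $X_n$ has real (hence complex) entries and is a nilpotent matrix whose associated partition is $\mathbf{d}(n)$. If $g\in\SL(n,\H)$ is an involution with $gXg^{-1}=-X$, then, by uniqueness of the Jordan decomposition exactly as in the proof of \lemref{lem-real-lie-algebra}, one has $gX_sg^{-1}=-X_s$, $gX_ng^{-1}=-X_n$, and $g^2=\mathrm{I}_n$.

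Next I would unwind $gX_sg^{-1}=-X_s$. Since $X_s=(a\ib)\mathrm{I}_n$ with $a\neq 0$, comparing entries shows that every entry $q$ of $g$ satisfies $q\ib=-\ib q$, and a one-line computation in $\H$ shows this holds precisely when $q\in\C\jb$. Hence $g=A\jb$ for a unique $A\in\mathrm{M}(n,\C)$, and $A\in\GL(n,\C)$ because $g$ is invertible. Using $\jb z=\overline{z}\jb$ for $z\in\C$ and $\jb^2=-1$ gives $g^2=(A\jb)(A\jb)=-A\overline{A}$, so $g^2=\mathrm{I}_n$ yields $A\overline{A}=-\mathrm{I}_n$; and since $X_n$ is a real matrix it commutes with $\jb$, so $gX_n=AX_n\jb$ and $X_ng=X_nA\jb$, whence $gX_n=-X_ng$ becomes $AX_n=-X_nA$. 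Now $A\in\GL(n,\C)$ together with $AX_n=-X_nA$ and $A\overline{A}=-\mathrm{I}_n$ is exactly the hypothesis of \lemref{lem-nilpotent-reverser} applied to the nilpotent element $X_n\in\mathfrak{sl}(n,\C)$ with partition $\mathbf{d}(n)=[d_1^{t_{d_1}},\dots,d_s^{t_{d_s}}]$, so that lemma gives that $t_{d_\ell}$ is even for all $1\le\ell\le s$.

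I do not expect a genuine obstacle here: all the substantive work — writing the conjugating matrix in block-upper-triangular form with respect to the adapted ordered basis and computing the determinants of its diagonal blocks — is already packaged inside \lemref{lem-nilpotent-reverser}. The only points needing care are the quaternionic bookkeeping (left versus right multiplication when moving $\jb$ past matrices over $\R$ or $\C$) and the observation that the nilpotent part of $\mathrm{J}(\mathbf{d}(n),a\ib)$ is literally a real matrix carrying the partition $\mathbf{d}(n)$; both are routine.
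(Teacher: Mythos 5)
Your argument is correct and follows essentially the same route as the paper: both extract from $gX_sg^{-1}=-X_s$ that $g=A\jb$ with $A\in\GL(n,\C)$, convert $g^2=\mathrm{I}_n$ and $gX_ng^{-1}=-X_n$ into $A\overline{A}=-\mathrm{I}_n$ and $AX_n=-X_nA$, and then invoke Lemma~\ref{lem-nilpotent-reverser}. The quaternionic bookkeeping ($\jb z=\overline{z}\jb$, reality of $\mathrm{N}(\mathbf{d}(n),0)$) is handled correctly, so there is nothing to add.
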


\begin{proof}
	Since $X$ is strongly ${\rm Ad}_{{\rm SL }(n,\H)} $-real, there exists an involution $ g \in \mathrm{SL}(n,\H) $ such that $gXg^{-1} = -X$.
	Write $X = X_s + X_n $, where $X_s = (a \ib) \mathrm{I}_n $ and $X_n =  \mathrm{N}(\mathbf{d}(n),0)$.  Then
	\begin{equation}\label{eq-1-lem-part-sl(n,H)}
		g ((a \ib) \mathrm{I}_n) =- ((a \ib) \mathrm{I}_n)  g, 
	\end{equation}
	\begin{equation}\label{eq-2-lem-part-sl(n,H)}
		g \,  ( \mathrm{N}(\mathbf{d}(n),0)) =- (\mathrm{N}(\mathbf{d}(n),0)) \,  g,  \, \ \hbox{and} \ g^2= \mathrm{I}_n. 
	\end{equation}
	
	Now from \eqref{eq-1-lem-part-sl(n,H)},  we get $ g= h \jb $, where $ h \in \mathrm{GL}(n,\C) $.  Since $ h \in \mathrm{GL}(n,\C) $, we have 
	\begin{equation} \label{eq-3-lem-part-sl(n,H)}
		h \jb = \jb \overline{h},  \quad  \hbox{ and  } \quad h\,  (\mathrm{N}(\mathbf{d}(n),0)) =- (\mathrm{N}(\mathbf{d}(n),0))  h. 
	\end{equation}
	
	Thus from   \eqref{eq-2-lem-part-sl(n,H)} and \eqref{eq-3-lem-part-sl(n,H)}, we have $ h \in \mathrm{GL}(n,\C) $ satisfying following properties:
	\begin{enumerate}
		\item $h \,  ( \mathrm{N}(\mathbf{d}(n),0)) =- (\mathrm{N}(\mathbf{d}(n),0)) \,  h$,
		\item $h \overline{h} = - \mathrm{I}_n$.
	\end{enumerate}
	The proof now follows from Lemma \ref{lem-nilpotent-reverser}.
\end{proof}

Next we will give a proof of \thmref{thm-strong-real-sl(n,H)}, which classifies the strongly  ${\rm Ad}_{{\rm SL }(n,\H)} $-real elements in $ \mathfrak{sl}(n,\H)$.

\noindent{\bf Proof of \thmref{thm-strong-real-sl(n,H)}.~}
First, we assume that $X$ satisfies condition (\ref{cond-2-main-thrm-sl(n,H)-str}) of \thmref{thm-strong-real-sl(n,H)}.  Suppose that distinct non-zero purely imaginary eigenvalues of $X$ are represented by $\mu_i$, $1 \leq i \leq \ell $ with multiplicities $m_{\mu_i}$. Let ${\d}({m_{\mu_i}})\in \mathcal{P}(m_{\mu_i}) $ be the corresponding partition to $ \mu_i $.  Note that each  ${m_{\mu_i}}$ is even. 

Since $X$ is real in $\mathfrak{sl}(n,\H) $, so it satisfies conditions (\ref{cond-1-real-sl(n,H)-thr}) and (\ref{cond-2-real-sl(n,H)-thr}) of Theorem \ref{thm-real-sl(n,H)}.  For $1 \leq j \leq r $,
let $\lambda_j$ denote the distinct eigenvalues of $X$ with non-zero real part and multiplicity $m_{\lambda_j}$.
For each $ j $, $-\lambda_j$ is also an eigenvalue of $X$ with the same multiplicity $m_{\lambda_j}$ and the same partition ${\d}({m_{\lambda_j}})$.  Let $p_o \in \N\cup \{0\} $ denote the multiplicity of the eigenvalue $ 0 $ of $X$ such that $p_o +q +2p =n$, where $ \sum_{i=1}^{\ell} m_{\mu_i}= q,  \ \sum_{j=1}^{r} m_{\lambda_j} = p$.  Note that $q$ is even.

Then Jordan decomposition of $X$ has the following form: 
$$X=
\begin{pmatrix}
	\mathrm{N}(\d(p_o),\,0) &  & \\
	&  A \\ 
	& & B
\end{pmatrix},$$
where $A =  \oplus_{ i=1}^{\ell}  \mathrm{J}({\d}({m_{\mu_i}}),\mu_i )$ and $B=  \Big( \oplus_{ j=1}^{r}  \mathrm{J}({\d}({m_{\lambda_j}}),\lambda_j ) \Big)  \bigoplus \Big( \oplus_{ j=1}^{r}  \mathrm{J}({\d}({m_{\lambda_j}}),-\lambda_j )\Big)$.

We see that  each ${\d}({m_{\mu_i}})$ satisfies condition (\ref{cond-2-main-thrm-sl(n,H)-str})  of \thmref{thm-strong-real-sl(n,H)}.
After rearranging the Jordan blocks in $A$ and using  Lemma \ref{lem-jor-mat-strong}, we can construct an involution  $g_1 \in \mathrm{SL}(q,\H)$ such that $g_1 A {g_1}^{-1}=-A$.  Similarly,  using  idea of the proof of Theorem \ref{thm-strongly-real-sl(n,C)},  we can construct an involution $g_2 \in \mathrm{SL}(2p,\H) $ such that $g_2 B {g_2}^{-1}=-B$.
Define $g := {g_{o}} \oplus g_1 \oplus g_2 $, where $g_o =  \textnormal{diag} (1,  -1,  1,  -1, \dots , (-1)^{{p_o} +1} \ )_{p_o \times p_o }. $  Then $g \in \mathrm{SL}(n,\H) $ is an involution such that $gXg^{-1} =- X$.   

Conversely,  let $X$ be strongly $ {\rm Ad}_{{\SL}(n,\H)} $-real such that $\mu$ is the unique complex representative of a purely imaginary non-zero eigenvalue class of $X$ with multiplicity $m_{\mu}$. Then $\mu = a \ib$, where $a \in \R, a>0$ .
Therefore, we can write Jordan decomposition of $X$ as:
$$
X= X_s + X_n, $$ 
such that $ X_s = (\mu \mathrm{I}_{m_\mu} ) \bigoplus \Big( \oplus_{ j=1}^{r} (\lambda_j \mathrm{I}_{m_{\lambda_j}} ) \Big) $
and $X_n = \mathrm{N}(\d(m_\mu ),\,\mu) \bigoplus  \Big( \oplus_{ j=1}^{r}     \mathrm{N}(\d(m_{\lambda_j},\,\lambda_j) \Big)$,
where $\lambda_j$ are distinct complex numbers with non-negative imaginary part such that $ [\mu] \neq [\lambda_j] $ for all  $1 \le  j \le r. $

Let $g=[g_{i,j}]_{1 \leq i, j \leq n} \in \mathfrak{sl}(n,\H)$ be an involution such that $gXg^{-1}=-X$. Then
\begin{equation} \label{eq-1-str-sl(n,H)}
	gX_sg^{-1}=-X_s,\,\  gX_ng^{-1}=-X_n \ \hbox{and} \  g^2= \mathrm{I}_n.
\end{equation}
Since $[\mu] \neq [\lambda_j]$ for all $ 1 \leq j \leq r$, from the relation $gX_sg^{-1}=-X_s$,  we get that $g$ has the following form: 
\begin{equation} \label{eq-2-str-sl(n,H)}
	g= \begin{pmatrix}
		g_1 &  \\
		&  g_2 \\ 
	\end{pmatrix}, \, \hbox{where } \,  g_1 \in {\GL}(m_{\mu}, \H), g_2 \in {\GL}((n- m_{\mu}), \H).
\end{equation}
From  \eqref{eq-1-str-sl(n,H)} and \eqref{eq-2-str-sl(n,H)}, we get that there exists an involution $g_1 \in {\SL}(m_{\mu}, \H)$ such that $(g_1) \Big( \mathrm{J}({\d}({m_{\mu}}),\mu ) \Big) (g_1^{-1}) = - \mathrm{J}({\d}({m_{\mu}}),\mu)$, where ${\d}({m_{\mu}})$ is partition corresponding  to $\mu$.  The proof now follows from Lemma \ref{lem-partition-sl(n,H)}.
\qed

\subsection*{Acknowledgment} 
The authors thank Peggy L. Currid for communications during the review process.

\end{document}